\newcommand{\IC}{\mathbb{C}}
\newcommand{\IR}{\mathbb{R}}
\renewcommand{\H}{ H}
\newcommand{\question}[1]{\leavevmode{\marginpar{\tiny%
			$\hbox to 0mm{\hspace*{-0.5mm}$\leftarrow$\hss}%
			\vcenter{\vrule depth 0.1mm height 0.1mm width \the\marginparwidth}%
			\hbox to 0mm{\hss$\rightarrow$\hspace*{-0.5mm}}$\\\relax\raggedright #1}}}
\newcommand{\rg}{\mathrm{rg}}
\newcommand{\tr}{\mathrm{tr}}
\newcommand{\dom}{\mathrm{Dom}}
\newcommand{\IN}{\mathbb{N}}
\newcommand{\IZ}{\mathbb{Z}}
\newcommand{\Id}{{\rm d}}
\newcommand{\p}{\partial}
\newcommand{\one}{\mathbbm{1}}
\DeclareMathOperator*{\esssup}{ess\,sup}
\DeclareMathOperator*{\supp}{supp}
\theoremstyle{plain}            
\newtheorem{theorem}{theorem}[section]
\newtheorem{Lemma}[theorem]{Lemma}
\newtheorem{Corollary}[theorem]{Corollary}
\newtheorem{Theorem}[theorem]{Theorem}
\newtheorem{Proposition}[theorem]{Proposition}
\theoremstyle{definition}
\newtheorem{Definition}[theorem]{Definition}
\newtheorem{Hypothesis}[theorem]{Hypothesis}
\newtheorem{Remark}[theorem]{Remark}
\newtheorem{Example}[theorem]{Example}
\title{Trace Class Properties of Resolvents of Callias Operators}
\author{Oliver F\"urst}
\date{28th June 2022}
\begin{document}
	
	\maketitle
	
	\begin{abstract}
		We present conditions for a family $\left(A\left(x\right)\right)_{x\in\IR^{d}}$ of self-adjoint operators in $H^{r}=\IC^{r}\otimes H$ for a separable complex Hilbert space $H$, such that the Callias operator $D=ic\nabla+A\left(X\right)$ satisfies that $\left(D^{\ast}D+1\right)^{-N}-\left(DD^{\ast}+1\right)^{-N}$ is trace class in $L^2\left(\IR^{d},H^{r}\right)$. Here, $c\nabla$ is the Dirac operator associated to a Clifford multiplication $c$ of rank $r$ on $\IR^{d}$, and $A\left(X\right)$ is fibre-wise multiplication with $A\left(x\right)$ in $L^2\left(\IR^{d},H^{r}\right)$.
\end{abstract}
	
\section{Introduction}

	Let $d,r\in\IN$ and let $c$ be a Clifford multiplication over $\IR^{d}$ in $\IC^{r}$, and consider the associated Dirac operator $c\nabla$ on $\IR^{d}$,
	\begin{align}
		c\nabla f=\sum_{j=1}^{d}c\left(\Id x^{j}\right)\p_{x^{j}}f,\ f\in C^{1}\left(\IR^{d},\IC^{r}\right).
	\end{align}
	
	Let $H$ be a separable complex Hilbert space. Denote $H^{r}:=\IC^{r}\otimes H$. We consider $c\nabla$ as a self-adjoint linear operator in $L^2\left(\IR^{d},H^{r}\right)$ with domain $\dom\left(c\nabla\right):=W^{1,2}\left(\IR^{d},H^{r}\right)$, the $L^2$-Sobolev space of order $1$ with values in $H^{r}$.
	
	Now let $A=\left(A\left(x\right)\right)_{x\in\IR^{d}}$ be a family of self-adjoint operators in $H^{r}$. Then we associate to $A$ the fibre wise multiplication operator $A\left(X\right)$ in $L^2\left(\IR^{d},H^{r}\right)$, given by
	\begin{align}
		\left(A\left(X\right)f\right)\left(x\right):=&A\left(x\right)f\left(x\right),\ x\in\IR^{d}.
	\end{align}
	The Callias operator $D$ is then
	\begin{align}
		D:=ic\nabla+A\left(X\right).
	\end{align}
	The goal of this paper is to give conditions on the operator family $A$, such that the resolvent powers of $D^{\ast}D$ and $DD^{\ast}$ are trace comparable, i.e. such that for some $N\in\IN$,
	\begin{align}\label{caltracecomp}
		\left(D^{\ast}D+1\right)^{-N}-\left(DD^{\ast}+1\right)^{-N}\in S^1\left(L^2\left(\IR^{d},H^{r}\right)\right),
	\end{align}
	where $S^{p}\left(Y\right)$ denotes the $p$-th Schatten-von Neumann operators on a Hilbert space $Y$.
	
Before we proceed with a proper introduction of the involved operators, let us briefly contextualize why one might be interested in property (\ref{caltracecomp}).

The name "Callias operator" reflects the contribution of C. Callias in \cite{Cal} to the Fredholm index problem of the operator $D$ for a family of matrix potentials $A$, which arises from Yang-Mills theory. A similar index problem in the special case of dimension $d=1$ had been considered in the seminal series of articles by  M. Atiyah, V. Patodi, and I. Singer in \cite{APS1},\cite{APS2}, and especially \cite{APS3}, dealing with the index problem on manifolds with boundary. In their setup, the family $A$ is given by first order differential operators on a compact manifold. 

A. Pushnitski showed in \cite{Push} that the Callias operator $D$ in one dimension $d=1$ for a family $A\left(x\right)=A_{-}+B\left(x\right)$, $x\in\IR$, where
\begin{align}\label{pushcond}
	\int_{\IR}\left\|B'\left(x\right)\right\|_{S^1\left(H\right)}\Id x<\infty,
\end{align}
satisfies condition (\ref{caltracecomp}) for $N=1$, and one obtains the trace formula
\begin{align}\label{pushtraceformula}
	\tr_{L^2\left(\IR,H\right)}\left(\left(D^{\ast}D+z\right)^{-1}-\left(DD^{\ast}+z\right)^{-1}\right)&=\frac{1}{2z}\tr_{H}\left(g_{z}\left(A_{+}\right)-g_{z}\left(A_{-}\right)\right),\ z\geq 1,\nonumber\\
	g_{z}\left(\lambda\right)&:=\frac{\lambda}{\left(\lambda^{2}+z\right)^{\frac{1}{2}}},
\end{align}
where the limits $A_{\pm}=\lim_{x\to\pm\infty}A\left(x\right)$ exist in an appropriate sense. Furthermore it was shown that one can calculate the Fredholm index of $D$ in terms of the spectral shift function of the operators $A_{+}$ and $A_{-}$.

The results by Pushnitski were shown under less restrictive conditions on the family $\left(A\left(x\right)\right)_{x\in\IR}$ by F. Gesztesy et al. in \cite{GLMST}. The authors were especially able to weaken the required condition (\ref{pushcond}) to
\begin{align}\label{gescond}
	\int_{\IR}\left\|B'\left(x\right)\left(A_{-}^{2}+1\right)^{-\frac{1}{2}}\right\|_{S^1\left(H\right)}\Id x<\infty,
\end{align}
which is the first instance where the family of operators $A\left(x\right)$ are generated by possibly unbounded, non-commutative and non-discrete perturbations.

More recently, A. Carey et al. in \cite{Car} showed property (\ref{caltracecomp}) holds for an appropriate $N\in\IN$ in the one-dimensional case $d=1$, if the family $A$ is itself given by perturbations of a Dirac operator by a matrix potential, and if one imposes weaker trace-class requirements on $A$ than (\ref{gescond}), dependent on a parameter (on which the choices for $N$ also depend).

The goal of this paper is to present a collection of conditions on the family $A$ for general $d\in\IN$, such that $A$ can be obtained by unbounded, non-commutative and non-discrete perturbations of a general self-adjoint model operator $A_{0}$, such that the associated Callias operator $D$ satisfies property (\ref{caltracecomp}) for a suitable $N\in\IN$. The necessary requirements on the trace-class properties of $A$ will be weaker for larger $N$. To determine the trace and index formula for $D$ in this instance will however be a task for future work.

Let us present the main result of this paper and review the conditions on the operator family $A$.

We start with reasonably generic assumptions, which allow us to differentiate the operator family $A$ on $\IR^{d}$. These conditions are chosen such that one obtains favourable domain properties of $D$, and they do not pertain to the trace-class properties directly.

Let $A_{0}$ be a self-adjoint operator in $H^{r}$ and let $A\left(x\right)$, $x\in\IR^{d}$, be symmetric operators, with the dense set
\begin{align}
	\mathcal{D}=\bigcup_{n\in\IN}\rg\left(\one_{\left[-n,n\right]}\left(A_{0}\right)\right)\subseteq H^{r},
\end{align}
contained in all domains $\dom\left(A\left(x\right)\right)$, $x\in\IR^{d}$. For compatibility with Clifford multiplication we require that $c\left(\Id x^{j}\right)\left(\mathcal{D}\right)\subseteq\dom\left(A_{0}\right)$, $j\in\left\{1,\ldots,d\right\}$. We assume that for all $\phi\in\mathcal{D}$ and $\psi\in H^{r}$ the function
\begin{align}
	x\mapsto\langle A\left(x\right)\phi,\psi\rangle_{H^{r}},
\end{align}
is Lebesgue measurable on $\IR^{d}$ (see Hypothesis \ref{basichyp}). For a yet to choose $N\in\IN$, we then require that $x\mapsto A\left(x\right)\phi$ is $2N-1$-times weakly differentiable for $\phi\in\mathcal{D}$, with derivatives in $L^2_{loc}\left(\IR^{d},H^{r}\right)$, which we abbreviate with $A\in W^{2N-1,2,End}_{loc}\left(\IR^{d},\left(\mathcal{D},H^{r}\right)\right)$  (see Definition \ref{opdiffdef}). We impose Kato-Rellich type bounds on the derivatives of $A$ to ensure that $\left(D^{\ast}D\right)^{N}$ and $\left(DD^{\ast}\right)^{N}$ are self-adjoint operators on the domain $W^{2N,2}\left(\IR^{d},H^{r}\right)\cap L^2\left(\IR^{d},\dom\left(A_{0}^{2N}\right)\right)$ (see Proposition \ref{hdomainprop}).

Let us now discuss the trace-class conditions on $A$, which are the essential assumptions.

Denote $n:=\max\left(\lfloor\frac{d}{2}-2\rfloor+1,0\right)\in\IN$, and let $\alpha,\beta\in\IR^{\geq 0}$. If $A\left(x\right)$ commutes with $c\left(\Id x^{j}\right)$ for all $x\in\IR^{d}$ and $j\in\left\{1,\ldots,d\right\}$, then let $N\in\IN$ with $N>\frac{\alpha}{2}+\frac{n}{2}+\frac{d}{4}$, otherwise let $N>\max\left(\frac{\alpha}{2}+\frac{n}{2}+\frac{d}{4},\frac{\beta}{2}+\frac{n}{2}+\frac{d}{4}+\frac{1}{2}\right)$.

Denote
\begin{align}
 	c\nabla^{End}A&:=\sum_{j=1}^{d}c\left(\Id x^{j}\right)\p_{x^{j}}^{End}A,\nonumber\\
 	\nabla^{End}Ac&:=\sum_{j=1}^{d}\p_{x^{j}}^{End}Ac\left(\Id x^{j}\right),
 \end{align}
where $\left(\p_{x^{j}}^{End}A\right)f=\p_{x^{j}}\left(Af\right)-A\p_{x^{j}}f$ are appropriately defined partial derivatives of operators (see Definition \ref{opdiffdef}). Then we assume
\begin{align}\label{tracecondeq1}
	\int_{\IR^{d}}\left\|\p^{\gamma,End}\left(c\nabla^{End}A\langle A_{0}\rangle^{-\alpha}\right)\left(x\right)\right\|_{S^1\left(H^{r}\right)}\Id x&<\infty,\nonumber\\
	\int_{\IR^{d}}\left\|\p^{\gamma,End}\left(\nabla^{End}Ac\langle A_{0}\rangle^{-\alpha}\right)\left(x\right)\right\|_{S^1\left(H^{r}\right)}\Id x&<\infty,\nonumber\\
	\int_{\IR^{d}}\left\|\p^{\gamma,End}\left(\left[c\left(\Id x^{j}\right),A\right]\langle A_{0}\rangle^{-\beta}\right)\left(x\right)\right\|_{S^1\left(H^{r}\right)}\Id x&<\infty,\ j\in\left\{1,\ldots,d\right\},
\end{align}
for all $\gamma\in\IN^{d}$ with $\left|\gamma\right|\leq n$.

We should stress that $n=0$ for dimensions $d\leq 3$, which contains the situation discussed in \cite{Push},\cite{GLMST}, and \cite{Car}. We note especially that the first line of (\ref{tracecondeq1}) is analogous to the condition in the one-dimensional case for example like (\ref{gescond}) in \cite{GLMST}. In case $d=r=1$ the second and third line of (\ref{tracecondeq1}) is trivially satisfied since (scalar) $c$ commutes with $A$ automatically. In dimensions $d\geq 4$ we see that $N>1$ becomes necessary. In any case, we should note the basic feature of all the conditions in (\ref{tracecondeq1}): One can increase $\alpha$ and $\beta$ to decrease the restriction on the family $A$, in exchange for a possibly larger $N$, which weakens property (\ref{caltrace}) of the Callias operator $D$. This is for example useful if $A$ is a family of (pseudo-)differential operators (cf. Chapter 4 in \cite{Simon}). We will illustrate this last point in Example \ref{example} for the concrete case of a Dirac operator perturbed by a potential.

All presented conditions on $A$ together (see Hypothesis \ref{mainassumpt}) imply the desired property
\begin{align}\label{caltrace}
	\left(D^{\ast}D+1\right)^{-N}-\left(DD^{\ast}+1\right)^{-N}\in S^1\left(L^2\left(\IR^{d},H^{r}\right)\right),
\end{align}
for the Callias operator $D$, which is the principal result of this paper (see Theorem \ref{mainthm}).

\section{Notation and Basic Definitions}

	We fix $i$ as the imaginary unit in $\IC$. Let $H$ be a separable complex Hilbert space and denote with $H^{r}:=H\otimes\IC^{r}$ for $r\in\IN$. Let $L^{p}\left(\IR^{d}\right)$ be the space of $p$-Lebesgue-integrable elements on $\IR^{d}$, and $W^{k,2}\left(\IR^{d}\right)$ the $L^2$-Sobolev space of order $k$ on $\IR^{d}$. For $1\leq p\leq\infty$ denote with $L^{p}\left(\IR^{d},H^{r}\right)$ the $H^{r}$-valued (Bochner-Lebesgue-)$L^p$-elements over $\IR^{d}$, i.e. $f\in L^{p}\left(\IR^{d},H^{r}\right)$ if for all $\phi\in H^{r}$,
	\begin{align}
		\IR^{d}\ni x&\mapsto\langle f\left(x\right),\phi\rangle_{H^{r}}\ \text{is Lebesgue measurable},\nonumber\\
		\left\|f\right\|_{L^{p}\left(\IR^{d},H^{r}\right)}^{p}&:=\int_{\IR^{d}}\left\|f\left(x\right)\right\|_{H^{r}}^{p}\Id x<\infty,\ p<\infty,\nonumber\\
		\left\|f\right\|_{L^{\infty}\left(\IR^{d},H^{r}\right)}&:=\esssup_{x\in\IR^{d}}\left\|f\left(x\right)\right\|_{H^{r}}<\infty,\ p=\infty.
	\end{align}
	Let $\mathcal{F}$ be the (isometric) Fourier transform
	\begin{align}
		\mathcal{F}: L^2\left(\IR^{d},H^{r},\Id x\right)&\rightarrow L^2\left(\IR^{d},H^{r},\frac{1}{\left(2\pi\right)^{d}}\Id\xi\right),\nonumber\\
		\mathcal{F}\left(f\right)\left(\xi\right)&:=\int_{\IR^{d}}e^{-i\langle x,\xi\rangle_{\IR^{d}}}f\left(x\right)\Id x,
	\end{align}
	obtained via continuous extension and Hilbert space tensor product from the classical Fourier transform on Schwartz functions in $\IR^{d}$. Usually we write $\widehat{f}:=\mathcal{F}\left(f\right)$. Note that as in the scalar case the Fourier transform satisfies by complex interpolation for $f\in L^{p}\left(\IR^{d},H^{r}\right)$, $1\leq p\leq 2$, and $\frac{1}{p}+\frac{1}{q}=1$,
	\begin{align}
		\left\|\widehat{f}\right\|_{L^{q}\left(\IR^{d},H^{r}\right)}\leq\left\|f\right\|_{L^{p}\left(\IR^{d},H^{r}\right)}.
	\end{align}
	
	We also use the abbreviations $\langle z\rangle:=\left(1+\left|z\right|^{2}\right)^{\frac{1}{2}}$, $\one_{M}\left(z\right):=\begin{cases}
		1,&z\in M,\\
		0,&z\notin M,
	\end{cases}$
	for $M\subseteq\IC$, and $z\in\IC$, and $\mathcal{B}\left(V\right)$ for the Borel $\sigma$-algebra of a topological space $V$.
	
	Denote with $W^{k,2}\left(\IR^{d},H^{r}\right)$ the $H^{r}$-valued $L^2$-Sobolev space of order $k\geq 0$ over $\IR^{d}$, i.e. $f\in W^{k,2}\left(\IR^{d},H^{r}\right)$ if $f\in L^{2}\left(\IR^{d},H^{r}\right)$, and
	\begin{align}
		\IR^{d}\ni\xi\mapsto\langle\xi\rangle^{k}\widehat{f}\left(\xi\right)\in L^{2}\left(\IR^{d},H^{r}\right).
	\end{align}
	
	 We abbreviate partial derivatives by
	\begin{align}
		\left(\p^{\gamma}f\right)\left(x\right)=\frac{\p^{\left|\gamma\right|}}{\p_{x^{1}}^{\gamma_{1}}\cdot\ldots\cdot\p_{x^{d}}^{\gamma_{d}}}f\left(x^{1},\ldots,x^{d}\right),\ x\in\IR^{d},\ f\in C^{\infty}\left(\IR^{d}\right),\ \gamma\in\IN^{d},
	\end{align}
	and denote the extension of $\p^{\gamma}$ to the Sobolev space $W^{\left|\gamma\right|,2}\left(\IR^{d},H^{r}\right)$ with the same letter. We note that for $k\in\IN$,
	\begin{align}
		f\in W^{k,2}\left(\IR^{d},H^{r}\right)\Leftrightarrow \p^{\gamma}f\in L^{2}\left(\IR^{d},H^{r}\right),\ \left|\gamma\right|\leq k.
	\end{align}
	
	Let $c$ be a Clifford multiplication over $\IR^{d}$ of rank $r\in\IN$, and consider the associated Dirac operator $c\nabla$ on $\IR^{d}$, i.e. there are Clifford matrices $\left(c\left(\Id x^{j}\right)\right)_{j=1}^{d}\in\IC^{r\times r}$, such that
	\begin{align}
		c\left(\Id x^{k}\right)c\left(\Id x^{l}\right)+c\left(\Id x^{l}\right)c\left(\Id x^{k}\right)=-2\delta_{kl}\one_{\IC^{r\times r}},\ k,l\in\left\{1,\ldots,d\right\}.
	\end{align}
	Then $c\nabla$ is the operator in $L^2\left(\IR^{d},H^{r}\right)$ given by
	\begin{align}
		c\nabla f&:=\sum_{j=1}^{d}c\left(\Id x^{j}\right)\p_{x^{j}}f,\nonumber\\
		\dom\left(c\nabla\right)&:=W^{1,2}\left(\IR^{d},H^{r}\right).
	\end{align}
	It is well-known that $c\nabla$ with the above domain is self-adjoint, and that $\dom\left(\langle c\nabla\rangle^{k}\right)=W^{k,2}\left(\IR^{d},H^{r}\right)$ as Banach spaces. The square of the Dirac operator is the Laplacian $\Delta=\left(c\nabla\right)^{2}$, with
	\begin{align}
		\Delta f&=-\sum_{j=1}^{d}\p_{x_{j}}^{2}f,\nonumber\\
		\dom\left(\Delta\right)&=W^{2,2}\left(\IR^{d},H^{r}\right).
	\end{align}
	Note that $\Delta+1=\langle c\nabla\rangle^{2}$.
	
	The Fourier transform $\mathcal{F}$	diagonalizes $\Delta$, i.e. for $f:\IR\rightarrow\IR$ a Borel function, and if $f\left(\left|X\right|^{2}\right)$ denotes the multiplication operator
	\begin{align}
		\left(f\left(\left|X\right|^{2}\right)g\right)\left(\xi\right)&:=f\left(\left|\xi\right|^{2}\right)g\left(\xi\right),\ \xi\in\IR^{d},\nonumber\\
		\dom\left(f\left(\left|X\right|^{2}\right)\right)&:=\left\{g\in L^2\left(\IR^{d},H^{r},\frac{1}{\left(2\pi\right)^{d}}\Id\xi\right)\right.\nonumber\\
		&\quad\quad\left.\left|\xi\mapsto f\left(\left|\xi\right|^{2}\right)g\left(\xi\right)\in L^2\left(\IR^{d},H^{r},\frac{1}{\left(2\pi\right)^{d}}\Id\xi\right)\right.\right\},
	\end{align}
	we have that
	\begin{align}
		\mathcal{F}:\dom\left(f\left(\Delta\right)\right)\rightarrow\dom\left(f\left(\left|X\right|^{2}\right)\right)
	\end{align}
	is an isometry with respect to the graph norms and
	\begin{align}
		f\left(\Delta\right)=\mathcal{F}^{-1}f\left(\left|X\right|^{2}\right)\mathcal{F}.
	\end{align}
	
	Throughout this paper we assume that $A_{0}$ is a given self-adjoint operator in $H^{r}$ with domain $\dom\left(A_{0}\right)$, which we may consider as a Banach space if equipped with the graph norm of $A_{0}$. We introduce the space
	\begin{align}
		\mathcal{D}:=\bigcup_{n\in\IN}\rg\left(\one_{\left[-n,n\right]}\left(A_{0}\right)\right)\subseteq H^{r},
	\end{align}
	which is dense in $H^{r}$, a simple consequence of self-adjoint functional calculus.
	
	In $L^2\left(\IR^{d},H^{r}\right)$ we may define the constant multiplication operator $\widehat{A_{0}}$ by
	\begin{align}
		\left(\widehat{A_{0}}f\right)\left(x\right)&:=A_{0}f\left(x\right),\ x\in\IR^{d},\nonumber\\
		\dom\left(\widehat{A_{0}}\right)&:=L^2\left(\IR^{d},\dom\left(A_{0}\right)\right).
	\end{align}

	We introduce the constant coefficient Callias operator $D_{0}$ in $L^2\left(\IR^{d},\IC^{r}\right)$, given by
	\begin{align}
		D_{0}&:=ic\nabla+\widehat{A_{0}},\nonumber\\
		\dom\left(D_{0}\right)&=W^{1,2}\left(\IR^{d},H^{r}\right)\cap\dom\left(\widehat{A_{0}}\right).
	\end{align}
	It has been shown in (Lemma 4.2, \cite{GLMST}) that $D_{0}$ is closed and its graph-norm coincides with the sum of the norms of $W^{1,2}\left(\IR^{d},H^{r}\right)$ and $L^2\left(\IR^{d},\dom\left(A_{0}\right)\right)$. Furthermore $D_{0}$ is normal with
	\begin{align}
		D_{0}^{\ast}&=-ic\nabla+\widehat{A_{0}},\nonumber\\
		\dom\left(D_{0}^{\ast}\right)&=\dom\left(D_{0}\right).
	\end{align}
	The automatically non-negative, self-adjoint operator $H_{0}:=D_{0}^{\ast}D_{0}$ satisfies $H_{0}=\Delta+\widehat{A_{0}}^{2}$ via the commuting functional calculi of $c\nabla$ and $\widehat{A_{0}}$.	
	
	Let us now introduce the first basic conditions on the operator family $A$.	
	\begin{Hypothesis}\label{basichyp}
		Assume $A=\left(A\left(x\right)\right)_{x\in\IR^{d}}$ is a family of symmetric operators in $H^{r}$, with $\dom\left(A\left(x\right)\right)\supseteq\mathcal{D}$, $x\in\IR^{d}$. Furthermore we suppose that for all $\phi,\psi\in\mathcal{D}$,
		\begin{align}
			\IR^{d}\ni x\mapsto\langle A\left(x\right)\phi,\psi\rangle_{H^{r}}
		\end{align}
		is (Lebesgue-)measurable.
	\end{Hypothesis}
	
	Under these conditions on $A$ we may define the multiplications operator $A\left(X\right)$ in $L^2\left(\IR^{d},H^{r}\right)$ by
	\begin{align}
		\left(A\left(X\right)f\right)\left(x\right)&:=A\left(x\right)f\left(x\right),\ x\in\IR^{d},\nonumber\\
		\dom\left(A\left(X\right)\right)&:=\left\{f\in L^2\left(\IR^{d},H^{r}\right)\left|x\mapsto A\left(x\right)f\left(x\right)\in L^2\left(\IR^{d},H^{r}\right)\right.\right\}.
	\end{align}
	Note that so far we have not enough assumptions on the family $A$ to claim that $A\left(X\right)$ is densely defined.
	
	However we may associate the Callias operator $D$ to the operator family $A$ via
	\begin{align}
		D&=ic\nabla+A\left(X\right),\nonumber\\
		\dom\left(D\right)&=\dom\left(c\nabla\right)\cap\dom\left(A\left(X\right)\right).
	\end{align}
	Again, we can not claim that $D$ is densely defined. This will be our first goal in the sequel to find additional conditions on the family $A$ to conclude that $D$ is a closed operator with domain $\dom\left(D\right)=\dom\left(D_{0}\right)$.
	
	Before we proceed with the first results, we also need to introduce a notion of differentiability for the family $A$.	
	\begin{Definition}\label{opdiffdef}
		Let $\left(Y,\left\|\cdot\right\|_{Y}\right)$ be a Banach space, and assume that $\mathcal{E}\subseteq Y$ is dense. Let $\left(B\left(x\right)\right)_{x\in\IR^{d}}$ be a measurable family of operators in $Y$, with domains $\dom\left(B\left(x\right)\right)\supseteq\mathcal{E}$, $x\in\IR^{d}$.
		If for $\gamma\in\IN^{d}$ there exists a measurable family of densely defined operators $C_{\gamma}\left(x\right)$, $x\in\IR^{d}$, with $\dom\left(C_{\gamma}\left(x\right)\right)\supseteq\mathcal{E}$, such that for all $f\in C_{c}^{\infty}\left(\IR^{d}\right)\otimes\mathcal{E}$,
		\begin{align}
			\int_{\IR^{d}}C_{\gamma}\left(x\right)f\left(x\right)\Id x=\left(-1\right)^{\left|\beta\right|}\int_{\IR^{d}}B\left(x\right)\p^{\beta}f\left(x\right)\Id x,
		\end{align}
		where the integral converges in $Y$ as Bochner integrals, we say $\left(B\left(x\right)\right)_{x\in\IR^{d}}$ is $\gamma$-times weakly differentiable on $\mathcal{E}$, and we set
		\begin{align}
			\left(\p^{\gamma,End}B\right)\left(x\right)\psi:=C_{\gamma}\left(x\right)\psi,\ \psi\in\mathcal{E}.
		\end{align}
		If additionally for all $\left|\gamma\right|\leq n\in\IN$, $K\subset\IR^{d}$ compact, and $\psi\in\mathcal{E}$,
		\begin{align}
			\int_{K}\left\|\left(\p^{\gamma,End}B\right)\left(x\right)\psi\right\|^{2}_{Y}\Id x<\infty,
		\end{align}
		we write $B\in W^{n,2,End}_{loc}\left(\IR^{d},\left(\mathcal{E},Y\right)\right)$.
	\end{Definition}
	In our setup the spaces are $Y=H^{r}$ and $\mathcal{E}=\mathcal{D}$.
	
	\section{Domain Properties of $A\left(X\right)$ and $D$}
	
		In this section we will establish conditions on the family $A$ which ensure that the Callias operator $D$ is closed, the operators $D^{\ast}D$ and $DD^{\ast}$ are self-adjoint, and their powers have the same domain as the powers of $H_{0}=D_{0}^{\ast}D_{0}$.
		
		We begin by by establishing a class of dense sets in $L^2\left(\IR^{d},H^{r}\right)$, which will be cores for some operators in $L^2\left(\IR^{d},H^{r}\right)$ we will discuss. 
		
		\begin{Lemma}\label{densesetlem}
			Let $s\geq 0$. Then
			\begin{align}
				\rg\left(\left.\left(H_{0}+1\right)^{s}\right|_{C_{c}^{\infty}\left(\IR^{d}\right)\otimes\mathcal{D}}\right)
			\end{align}
			is dense in $L^2\left(\IR^{d},H^{r}\right)$.
		\end{Lemma}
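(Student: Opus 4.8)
The plan is to show density by a duality argument: it suffices to prove that any $g \in L^2(\IR^d, H^r)$ orthogonal to the stated range is zero. So suppose $\langle (H_0+1)^s f, g \rangle = 0$ for all $f \in C_c^\infty(\IR^d) \otimes \mathcal{D}$. I would first reduce to the case where $g$ is spectrally localized for $\widehat{A_0}$: let $g_n := (\one_{[-n,n]}(\widehat{A_0})) g$, and note that since $C_c^\infty(\IR^d) \otimes \mathcal{D}$ is invariant under $\one_{[-n,n]}(\widehat{A_0})$ and this projection commutes with $H_0 = \Delta + \widehat{A_0}^2$ (the two functional calculi commute, as recorded in the excerpt), one gets $\langle (H_0+1)^s f, g_n \rangle = \langle (H_0+1)^s \one_{[-n,n]}(\widehat{A_0}) f, g \rangle = 0$ for all such $f$ as well. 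If I can show each $g_n = 0$, then $g = \lim_n g_n = 0$ (by the spectral theorem, $g_n \to g$ in $L^2$). This localization is the key simplification: on $\rg(\one_{[-n,n]}(\widehat{A_0}))$ the operator $\widehat{A_0}$ is bounded, so $H_0 + 1$ restricted there behaves essentially like $\Delta + \text{(bounded, nonneg)} + 1$, an operator comparable to $\langle c\nabla \rangle^2$, with domain the Sobolev space.

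Next, on the reduced problem, I would remove the power $(H_0+1)^s$. Fix $n$ and work inside the reducing subspace $L^2(\IR^d, \rg(\one_{[-n,n]}(A_0)))$; here $H_0 + 1$ is a positive self-adjoint operator bounded below by $1$, and $(H_0+1)^s$ is an isomorphism from $\dom((H_0+1)^s)$ onto the whole subspace. The set $C_c^\infty(\IR^d) \otimes \rg(\one_{[-n,n]}(A_0))$ is a core for every power $(H_0+1)^t$, $t \ge 0$ (this needs a short argument — see below), so $(H_0+1)^s$ applied to it is dense in the subspace precisely when the set itself is a core for $(H_0+1)^s$; more directly, I would argue that $\langle (H_0+1)^s f, g_n \rangle = 0$ for $f$ ranging over a core of $(H_0+1)^s$ forces $g_n$ to lie in $\dom((H_0+1)^s)$ with $(H_0+1)^s g_n = 0$ in the weak sense, hence $g_n = 0$ since $(H_0+1)^s$ is injective. (Alternatively: $(H_0+1)^{-s} g_n$ is then orthogonal to the core $C_c^\infty \otimes \rg(\one_{[-n,n]}(A_0))$, which is dense, so $(H_0+1)^{-s} g_n = 0$.)

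The one genuine technical point — and the step I expect to be the main obstacle — is verifying that $C_c^\infty(\IR^d) \otimes \mathcal{D}$ is a core for the relevant powers, or equivalently controlling the action of $(H_0+1)^s$ on this set well enough to make the duality rigorous. Using the Fourier transform, which diagonalizes $\Delta$ as multiplication by $|\xi|^2$, and the commuting functional calculus, $(H_0+1)^s = (\Delta + \widehat{A_0}^2 + 1)^s$ is unitarily equivalent to multiplication by $(|\xi|^2 + A_0^2 + 1)^s$ acting fibrewise. On $\rg(\one_{[-n,n]}(A_0))$ one has $1 \le A_0^2 + 1 \le n^2 + 1$, so $(|\xi|^2 + A_0^2 + 1)^s$ is pinched between constant multiples of $(1 + |\xi|^2)^s = \langle \xi \rangle^{2s}$; therefore $\dom((H_0+1)^s)$ intersected with the subspace equals $W^{2s,2}(\IR^d, \rg(\one_{[-n,n]}(A_0)))$ (for non-integer $s$ interpreted via the Fourier multiplier), on which $C_c^\infty(\IR^d) \otimes \rg(\one_{[-n,n]}(A_0))$ is dense by the standard mollification-and-truncation argument in Sobolev spaces (carried out coordinatewise in the finite-dimensional-in-the-fibre, or at least spectrally-bounded, $H^r$-component). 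This makes the set a core, closes the duality, and completes the proof. I would present the Fourier-multiplier comparison as the crux and keep the mollifier density argument as a brief remark, since it is entirely standard once the fibre is spectrally truncated.
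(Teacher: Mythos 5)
Your argument is correct, but it runs along a genuinely different track from the paper's. Both proofs are duality arguments that exploit the Fourier transform and the commuting functional calculi of $\Delta$ and $\widehat{A_{0}}$, but the paper works pointwise in frequency: after Plancherel and density of $\{\widehat{g}:g\in C_{c}^{\infty}(\IR^{d})\}$ in $L^{2}(\IR^{d})$, it obtains $\langle(1+|\xi|^{2}+A_{0}^{2})^{s}\psi,\widehat{f}(\xi)\rangle=0$ for a.e.\ $\xi$ and each $\psi\in\mathcal{D}$, and then uses separability of $H^{r}$ together with the fact that $(1+|\xi|^{2}+A_{0}^{2})^{-s}$ maps $\mathcal{D}$ into itself to conclude $\widehat{f}(\xi)=0$ a.e.; no spectral truncation of the test vector $g$ and no Sobolev-space core property are needed. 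Your route instead truncates $g$ via $\one_{[-n,n]}(\widehat{A_{0}})$, identifies $\dom((H_{0}+1)^{s})$ on the reducing subspace with $W^{2s,2}(\IR^{d},H_{n})$ by the two-sided comparison with $\langle\xi\rangle^{2s}$, and invokes density of $C_{c}^{\infty}(\IR^{d})\otimes H_{n}$ there. What your version buys is a cleaner structural picture and it sidesteps the measure-theoretic bookkeeping with $\xi$-dependent null sets (which in the paper's proof is actually the delicate point, since the vectors $\phi_{n}=(1+|\xi|^{2}+A_{0}^{2})^{-s}\psi_{n}$ depend on $\xi$); what it costs is the need to import the vector-valued Sobolev density/core fact, which you correctly flag as the crux and which does require the fibre truncation to be available. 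One small caveat: your parenthetical alternative at the end of the second paragraph is off as stated --- from $\langle(H_{0}+1)^{s}f,g_{n}\rangle=0$ one gets $(H_{0}+1)^{-s}g_{n}$ orthogonal to $(H_{0}+1)^{2s}$ applied to the core, not to the core itself --- but your main line (image of a core under the boundedly invertible self-adjoint operator $(H_{0}+1)^{s}$ is dense in its range, which is everything) is sound, so this does not affect the proof.
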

		
		\begin{proof}
			Let $f\in L^2\left(\IR^{d},H^{r}\right)$, such that for all $g\in C_{c}^{\infty}\left(\IR^{d}\right)$ and $\psi\in\mathcal{D}$,
			\begin{align}
				\langle\left(H_{0}+1\right)^{s}\left(g\otimes\psi\right),f\rangle_{L^2\left(\IR^{d},H^{r}\right)}=0.
			\end{align}
			If $\widehat{g}$ denotes the Fourier transform of $g$ and $\widehat{f}\in L^2\left(\IR^{d},H^{r}\right)$ the Fourier transform of $f$, by Plancherel's theorem we find that
			\begin{align}
				\int_{\IR^{d}}\widehat{g}\left(\xi\right)\langle\left(1+\left|\xi\right|^{2}+A_{0}^{2}\right)^{s}\psi,\widehat{f}\left(\xi\right)\rangle_{H^{r}}\Id\xi=0.
			\end{align}
			Because $\left\{\widehat{g}\left|g\in C_{c}^{\infty}\left(\IR^{d}\right)\right.\right\}$ is dense in $L^2\left(\IR^{d}\right)$, we conclude that for all $\psi\in\mathcal{D}$ exists a nullset $N_{\psi}\subset\IR^{d}$, such that for $\xi\in\IR^{d}\backslash N_{\psi}$
			\begin{align}
				\langle\left(1+\left|\xi\right|^{2}+A_{0}^{2}\right)^{s}\psi,\widehat{f}\left(\xi\right)\rangle_{H^{r}}=0.
			\end{align}
			$H^{r}$ is separable and $\mathcal{D}$ a dense subspace. Therefore there exists an at most countable subset $\left\{\psi_{n}\right\}_{n\in\IN}\subseteq\mathcal{D}$, which is dense in $H^{r}$.
			On the other hand for all $\xi\in\IR^{d}$ we have $\phi_{n}:=\left(1+\left|\xi\right|^{2}+A_{0}^{2}\right)^{-s}\psi_{n}\in\mathcal{D}$ for $n\in\IN$. Also $N=\bigcup_{n\in\IN}N_{\phi_{n}}$ is still a nullset in $\IR^{d}$. Thus for $\xi\in\IR^{d}\backslash N$ and all $n\in\IN$,
			\begin{align}
				\langle\psi_{n},\widehat{f}\left(\xi\right)\rangle_{H^{r}}=\langle\left(1+\left|\xi\right|^{2}+A_{0}^{2}\right)^{s}\phi_{n},\widehat{f}\left(\xi\right)\rangle_{H^{r}}=0.
			\end{align}
			But $\left\{\psi_{n}\right\}_{n\in\IN}$ is dense in $H^{r}$, so $\widehat{f}\left(\xi\right)=0$ for a.e. $\xi\in\IR^{d}$. Thus $f=0$ as an element of $L^2\left(\IR^{d},H^{r}\right)$. Consequently $\rg\left(\left.\left(H_{0}+1\right)^{s}\right|_{C_{c}^{\infty}\left(\IR^{d}\right)\otimes\mathcal{D}}\right)$ is dense in $L^2\left(\IR^{d},H^{r}\right)$. 
		\end{proof}
		
		Let us show that the operator families $B$ we want to consider give rise to densely defined multiplication operators $B\left(X\right)$.
	
		\begin{Lemma}\label{densemultiplierdomlem}
			Let
			\begin{align}
				B\in L^{2,End}_{loc}\left(\IR^{d},\left(\mathcal{D},H^{r}\right)\right):=W^{0,2,End}_{loc}\left(\IR^{d},\left(\mathcal{D},H^{r}\right)\right).
			\end{align}
			Then $C_{c}\left(\IR^{d}\right)\otimes\mathcal{D}\in\dom\left(B\left(X\right)\right)$. Especially $B\left(X\right)$ is densely defined.
		\end{Lemma}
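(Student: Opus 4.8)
The plan is to test the claim on elementary tensors and then extend by linearity and density. Take $g\in C_c(\IR^d)$ and $\psi\in\mathcal{D}$ and consider $f=g\otimes\psi$; clearly $f\in L^2(\IR^d,H^r)$ since $\|f\|_{L^2(\IR^d,H^r)}=\|g\|_{L^2(\IR^d)}\|\psi\|_{H^r}<\infty$. Because $\psi\in\mathcal{D}\subseteq\dom(B(x))$ for every $x$, we have $(B(X)f)(x)=g(x)B(x)\psi=g(x)(\p^{0,End}B)(x)\psi$, the last identity holding since the $0$-th weak derivative of $B$ is $B$ itself on $\mathcal{D}$. From the measurability of the family $B$ the map $x\mapsto(\p^{0,End}B)(x)\psi$ is weakly measurable, hence strongly measurable by Pettis' theorem as $H^r$ is separable; multiplying by the scalar function $g$ keeps it strongly measurable. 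So it only remains to check square-integrability.

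For the key estimate, let $K:=\supp g$, which is compact. Then
\begin{align}
	\int_{\IR^d}\left\|g(x)(\p^{0,End}B)(x)\psi\right\|_{H^r}^2\,\Id x\le\|g\|_{L^\infty(\IR^d)}^2\int_{K}\left\|(\p^{0,End}B)(x)\psi\right\|_{H^r}^2\,\Id x<\infty,
\end{align}
where the finiteness of the right-hand side is precisely the defining property of $B\in W^{0,2,End}_{loc}(\IR^d,(\mathcal{D},H^r))=L^{2,End}_{loc}(\IR^d,(\mathcal{D},H^r))$. Hence $g\otimes\psi\in\dom(B(X))$. Since $\mathcal{D}$ is a linear subspace of $H^r$, any finite sum $f=\sum_k g_k\otimes\psi_k$ with $g_k\in C_c(\IR^d)$, $\psi_k\in\mathcal{D}$ satisfies $f(x)\in\mathcal{D}$ for every $x$ and $(B(X)f)(x)=\sum_k g_k(x)B(x)\psi_k$; applying the estimate to each summand shows $f\in\dom(B(X))$, i.e. $C_c(\IR^d)\otimes\mathcal{D}\subseteq\dom(B(X))$.

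Finally, $C_c(\IR^d)\otimes\mathcal{D}$ is dense in $L^2(\IR^d,H^r)$: $C_c(\IR^d)$ is dense in $L^2(\IR^d)$, $\mathcal{D}$ is dense in $H^r$ by the functional calculus for $A_0$, and the algebraic tensor product of two dense subspaces is dense in the Hilbert space tensor product $L^2(\IR^d)\otimes H^r\cong L^2(\IR^d,H^r)$. Thus $B(X)$ is densely defined, which completes the argument. I do not expect a genuine obstacle; the only slightly delicate point is the measurability reduction via Pettis' theorem, which is what legitimizes treating $x\mapsto g(x)B(x)\psi$ as an element of $L^2(\IR^d,H^r)$ rather than merely a pointwise-defined function.
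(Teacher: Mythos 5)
Your proof is correct and follows essentially the same route as the paper: evaluate on an elementary tensor $g\otimes\psi$, bound $\int|g(x)|^2\|B(x)\psi\|_{H^r}^2\,\Id x$ by $\sup_K|g|^2$ times the local square-integral of $x\mapsto B(x)\psi$ over $K=\supp g$, and extend by linearity. The extra care you take with strong measurability via Pettis and with the density of $C_c(\IR^d)\otimes\mathcal{D}$ is fine but not needed beyond what the paper leaves implicit.
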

		
		\begin{proof}
			Let $g\in C^{\infty}_{c}\left(\IR^{d}\right)$ and $\psi\in\mathcal{D}$.	We need to show that $x\mapsto g\left(x\right)B\left(x\right)\psi\in L^2\left(\IR^{d},H^{r}\right)$, the statement of the Lemma then follows by linearity of $B\left(X\right)$. To this end let $\supp g\subseteq K$, where $K\subset\IR^{d}$ is compact. Then
			\begin{align}
				\int_{\IR^{d}}\left|g\left(x\right)\right|^{2}\left\|B\left(x\right)\psi\right\|_{H^{r}}^{2}\Id x\leq\sup_{x\in K}\left|g\left(x\right)\right|^{2}\cdot\int_{K}\left\|B\left(x\right)\psi\right\|_{H^{r}}^{2}\Id x<\infty.
			\end{align}
		\end{proof}
	
		The next Lemma enables us to decide if a family of bounded operators $B$ gives rise to a bounded multiplication operator $B\left(X\right)$ between $L^{q}$- and $L^{2}$-spaces.
	
		\begin{Lemma}\label{holderdomlem}
			Let $B\in L^{2,End}_{loc}\left(\IR^{d},\left(\mathcal{D},H^{r}\right)\right)$. Assume for $p\geq 2$,
			\begin{align}
				x\mapsto\left\|B\left(x\right)\right\|_{B\left(H^{r}\right)}\in L^{p}\left(\IR^{d}\right),
			\end{align}
			then for $\frac{1}{p}+\frac{1}{q}=\frac{1}{2}$,
			\begin{align}\label{holderdomlemeq1}
				L^{q}\left(\IR^{d},H^{r}\right)&\subseteq\dom\left(B\left(X\right)\right),\nonumber\nonumber\\
				\left\|B\left(X\right)\right\|_{L^{q}\left(\IR^{d},H^{r}\right)\to L^{2}\left(\IR^{d},H^{r}\right)}&\leq\left\|x\mapsto\left\|B\left(x\right)\right\|_{B\left(H^{r}\right)}\right\|_{L^{p}\left(\IR^{d}\right)}.
			\end{align}
		\end{Lemma}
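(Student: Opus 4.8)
The plan is to reduce the statement to an application of Hölder's inequality in the spatial variable $x$, combined with the already-established fact (Lemma \ref{densemultiplierdomlem}) that $B\left(X\right)$ makes sense on a dense set. First I would take an arbitrary $f\in L^{q}\left(\IR^{d},H^{r}\right)$ and estimate pointwise: for almost every $x\in\IR^{d}$ we have $f\left(x\right)\in\dom\left(B\left(x\right)\right)$ (this needs a small argument — see below) and
\begin{align}
	\left\|B\left(x\right)f\left(x\right)\right\|_{H^{r}}\leq\left\|B\left(x\right)\right\|_{B\left(H^{r}\right)}\cdot\left\|f\left(x\right)\right\|_{H^{r}}.
\end{align}
Then I would integrate the square of this over $\IR^{d}$ and apply Hölder's inequality with the conjugate exponents $\frac{p}{2}$ and $\frac{q}{2}$ (which are conjugate precisely because $\frac{1}{p}+\frac{1}{q}=\frac{1}{2}$, i.e. $\frac{2}{p}+\frac{2}{q}=1$):
\begin{align}
	\int_{\IR^{d}}\left\|B\left(x\right)f\left(x\right)\right\|_{H^{r}}^{2}\Id x\leq\int_{\IR^{d}}\left\|B\left(x\right)\right\|_{B\left(H^{r}\right)}^{2}\left\|f\left(x\right)\right\|_{H^{r}}^{2}\Id x\leq\left\|x\mapsto\left\|B\left(x\right)\right\|_{B\left(H^{r}\right)}\right\|_{L^{p}\left(\IR^{d}\right)}^{2}\left\|f\right\|_{L^{q}\left(\IR^{d},H^{r}\right)}^{2}.
\end{align}
Taking square roots gives both the inclusion $L^{q}\left(\IR^{d},H^{r}\right)\subseteq\dom\left(B\left(X\right)\right)$ and the claimed operator norm bound.

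The one genuinely non-routine point — and the main obstacle — is measurability: one must check that $x\mapsto B\left(x\right)f\left(x\right)$ is a well-defined element of $L^2\left(\IR^{d},H^{r}\right)$, i.e. weakly measurable, for general $f\in L^q$, not merely for $f$ in the algebraic tensor product $C_{c}\left(\IR^{d}\right)\otimes\mathcal{D}$ where Lemma \ref{densemultiplierdomlem} applies directly. The way I would handle this is to approximate $f$ by a sequence $f_{n}\in C_{c}\left(\IR^{d}\right)\otimes\mathcal{D}$ converging to $f$ in $L^{q}\left(\IR^{d},H^{r}\right)$ (such a sequence exists since that tensor product is dense in $L^q$, using separability of $H^r$ and density of $\mathcal{D}$). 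By the Hölder estimate above applied to $f_{n}-f_{m}$, the sequence $B\left(X\right)f_{n}$ is Cauchy in $L^{2}\left(\IR^{d},H^{r}\right)$, hence converges to some $h\in L^2\left(\IR^{d},H^{r}\right)$; passing to a subsequence, $f_{n}\left(x\right)\to f\left(x\right)$ and $B\left(x\right)f_{n}\left(x\right)\to h\left(x\right)$ for a.e.\ $x$. Since each $B\left(x\right)$ is bounded (hence closed) on $H^r$ with $f_{n}\left(x\right)\to f\left(x\right)$, this forces $f\left(x\right)\in\dom\left(B\left(x\right)\right)$ — which is automatic as $B\left(x\right)\in B\left(H^{r}\right)$, but one should note here that the hypothesis "$x\mapsto\|B\left(x\right)\|_{B\left(H^{r}\right)}\in L^p$" in particular entails $B\left(x\right)$ extends to a bounded operator for a.e.\ $x$ — and $B\left(x\right)f\left(x\right)=h\left(x\right)$ a.e.; in particular $x\mapsto B\left(x\right)f\left(x\right)$ agrees a.e.\ with the measurable function $h$, so $f\in\dom\left(B\left(X\right)\right)$ with $B\left(X\right)f=h$.

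Alternatively, and perhaps more cleanly, one can establish weak measurability of $x\mapsto B\left(x\right)f\left(x\right)$ directly: for fixed $\phi\in H^{r}$ the map $x\mapsto\langle B\left(x\right)f\left(x\right),\phi\rangle_{H^r}=\langle f\left(x\right),B\left(x\right)^{\ast}\phi\rangle_{H^r}$ can be shown measurable by first treating $f$ of the form $g\otimes\psi$ with $g\in C_{c}\left(\IR^{d}\right)$, $\psi\in\mathcal{D}$ (where it reduces to the measurability of $x\mapsto\langle B\left(x\right)\psi,\phi\rangle$ built into Definition \ref{opdiffdef}), then extending by linearity and an a.e.-convergence/density argument in $L^q$. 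Either route works; I would present the approximation argument since it simultaneously delivers the norm bound. The rest of the proof is then the two displayed inequalities above, which I regard as routine.
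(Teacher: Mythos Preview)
Your proposal is correct and follows the same route as the paper: the paper's entire proof reads ``The statement follows immediately by H\"older's inequality.'' You are in fact considerably more careful than the paper, which does not address the measurability/domain point at all; your approximation argument via $C_c\left(\IR^d\right)\otimes\mathcal{D}$ is a legitimate way to fill that gap.
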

		
		\begin{proof}
			The statement follows immediately by H\"older's inequality.
		\end{proof}
	
		Since we want to use Kato-Rellich's theorem for perturbations of the operator $H_{0}^{\frac{N}{2}}$, we will need to know if a multiplication operator $B\left(X\right)$ is relatively bounded with bound strictly less than $1$. The next Lemma establishes such bounds via Sobolev inequalities.
		
		\begin{Lemma}\label{sobemblem}
			Let $B\in L^{2,End}_{loc}\left(\IR^{d},\left(\mathcal{D},H^{r}\right)\right)$, and $u\geq 0$. Assume there exists $t\in\left[0,u\right]$ and $p\in\left[2,+\infty\right]$, such that
			\begin{align}
				\left\|x\mapsto\left\|B\left(x\right)\langle A_{0}\rangle^{-2t}\right\|_{B\left(H^{r}\right)}\right\|_{L^{p}\left(\IR^{d}\right)}<\infty,\ \text{for }u-t>\frac{d}{2p},
			\end{align}
			or
			\begin{align}
				&\left\|x\mapsto\left\|B\left(x\right)\left(A_{0}^{2}+z\right)^{-t}\right\|_{B\left(H^{r}\right)}\right\|_{L^{p}\left(\IR^{d}\right)}=o\left(1\right),\ z\to+\infty,\nonumber\\
				&\text{for }\left(u=t, p=2\right)\vee\left(d\geq 3,\frac{d}{p}\in\IN,u-t=\frac{d}{2p}\right),
			\end{align}
			then
			\begin{align}
				\dom\left(H_{0}^u\right)&\subseteq\dom\left(B\left(X\right)\right),\nonumber\\
				\left\|B\left(X\right)\left(H_{0}+z\right)^{-u}\right\|_{B\left(L^2\left(\IR^{d},H^{r}\right)\right)}&=o\left(1\right),\ z\to+\infty.
			\end{align}
		\end{Lemma}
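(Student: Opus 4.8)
The plan is to factor $B(X)(H_0+z)^{-u}$ through an $L^q$-space, with $\tfrac1p+\tfrac1q=\tfrac12$: as a multiplication operator bounded from $L^q$ to $L^2$, composed with a Fourier multiplier (with operator-valued symbol) bounded from $L^2$ to $L^q$. The decay in $z$ then comes either from a rescaling of the Bessel potential governing the second factor, or directly from the $o(1)$-hypothesis on $B$.

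Put $\Phi:=\langle A_0\rangle^{2t}$ in the first case of the hypothesis and $\Phi:=(A_0^2+z)^{t}$ in the second; in either case $\Phi$ is a self-adjoint operator in $H^r$ with $\Phi\ge1$ for $z\ge1$ and $\dom(\Phi)=\dom(A_0^{2t})$. Since $t\le u$ and $H_0\ge\widehat{A_0}^2$, one has $\dom(H_0^u)\subseteq\dom(\widehat{A_0}^{2t})=\dom(\widehat\Phi)$, where $\widehat\Phi$ is fibrewise multiplication by $\Phi$; thus, for $f\in L^2(\IR^d,H^r)$, the element $g:=\widehat\Phi\,(H_0+z)^{-u}f$ is well defined, $[(H_0+z)^{-u}f](x)\in\dom(\Phi)\subseteq\dom(B(x))$ for a.e.\ $x$ (the last inclusion being contained in the hypothesis, which presupposes $B(x)\Phi^{-1}$ bounded on $H^r$), and, for a.e.\ $x$, $B(x)[(H_0+z)^{-u}f](x)=\bigl(B(x)\Phi^{-1}\bigr)g(x)$. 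Hence $B(X)(H_0+z)^{-u}f=[B(\cdot)\Phi^{-1}](X)\,g$ as soon as $g$ lies in the domain of the multiplication operator $[B(\cdot)\Phi^{-1}](X)$; once the estimate below shows $g\in L^q(\IR^d,H^r)$, Lemma~\ref{holderdomlem} gives this, so that $\dom(H_0^u)\subseteq\dom(B(X))$ and
\begin{align*}
	\|B(X)(H_0+z)^{-u}f\|_{L^2}\le\bigl\|\,x\mapsto\|B(x)\Phi^{-1}\|_{B(H^r)}\,\bigr\|_{L^p}\;\|g\|_{L^q}.
\end{align*}

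To bound $\|g\|_{L^q}$, note that the commuting functional calculi of $c\nabla$ and $\widehat{A_0}$ let $\mathcal F$ carry $\widehat\Phi\,(H_0+z)^{-u}$ to multiplication by the operator-valued symbol $\Phi\,(|\xi|^2+A_0^2+z)^{-u}$, which in the spectral representation of $A_0$ splits into the scalar Bessel-type symbols $\phi(\lambda)(|\xi|^2+\lambda^2+z)^{-u}$, with $\phi(\lambda)=\langle\lambda\rangle^{2t}$ resp.\ $(\lambda^2+z)^t$. Each such scalar multiplier has a non-negative kernel, and the rescaling $\xi\mapsto\sqrt{\lambda^2+z}\,\xi$ identifies its $L^2\to L^q$ norm with $\phi(\lambda)\,(\lambda^2+z)^{-u+d/(2p)}\,\|(\Delta+1)^{-u}\|_{L^2\to L^q}$, the last factor being finite by the Sobolev embedding $W^{2u,2}(\IR^d)\hookrightarrow L^q(\IR^d)$ (legitimate since $2u\ge 2(u-t)\ge\tfrac dp=\tfrac d2-\tfrac dq$). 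Using $\phi(\lambda)\le(\lambda^2+z)^t$ and $\lambda^2+z\ge z\ge1$, each of these norms is $\le C\,z^{-(u-t-d/(2p))}$, uniformly in $\lambda$; recombining the fibres by Minkowski's integral inequality (admissible as $q\ge2$) yields $\|g\|_{L^q}\le C\,z^{-(u-t-d/(2p))}\|f\|_{L^2}$, hence $\|B(X)(H_0+z)^{-u}\|_{B(L^2)}\le C\,z^{-(u-t-d/(2p))}\,\bigl\|\,x\mapsto\|B(x)\Phi^{-1}\|_{B(H^r)}\,\bigr\|_{L^p}$. If $u-t>\tfrac d{2p}$ the $L^p$-quantity is finite and the power of $z$ forces this to $0$; if $u-t=\tfrac d{2p}$ with $p>2$ the exponent vanishes, but the $L^p$-quantity is itself $o(1)$ by hypothesis and the embedding used is the still-valid critical one. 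The degenerate case $u=t$, $p=2$ (so $q=\infty$) falls outside this embedding and must be handled separately: there one keeps the whole $A_0$-gain inside the $L^2$-contraction $\widehat\Phi\,(H_0+z)^{-u}$ and combines the $o(1)$-smallness of the $L^2$-norm function of $B(\cdot)(A_0^2+z)^{-u}$ with the smoothing of the free resolvent.

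The step I expect to be the real obstacle is this endpoint analysis: when there is no Sobolev margin left, all of the decay in $z$ has to be squeezed out of the hypothesis on $B$, and in the $q=\infty$ case the clean factorisation through $L^q$ degenerates, so one must argue by hand with the contraction $\widehat\Phi\,(H_0+z)^{-u}$ and with $(H_0+z)^{-u}$ viewed as a smoothing operator — technically the most delicate part. A subsidiary point is the measurability and domain bookkeeping required to treat $[B(\cdot)\Phi^{-1}](X)$ and $\widehat\Phi$ as honest multiplication operators and to apply Lemma~\ref{holderdomlem} and the Fourier-multiplier estimate to them.
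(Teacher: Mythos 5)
Your proposal is correct and follows essentially the same route as the paper: factor $B(X)(H_0+z)^{-u}$ through $L^q(\IR^d,H^r)$ with $\tfrac1p+\tfrac1q=\tfrac12$, bound the multiplication part $L^q\to L^2$ by H\"older (Lemma~\ref{holderdomlem}) and the resolvent part $L^2\to L^q$ by Hausdorff--Young/scaling in the supercritical case and by the Sobolev inequality at the critical exponent, with the $o(1)$ in $z$ coming either from the power of $z$ or from the hypothesis on $B$. The only real difference is organisational: the paper avoids your fibrewise Minkowski recombination by first using the contraction $\left\|\left(\Delta+z\right)^{s}\left(\widehat{A_{0}}^{2}+z\right)^{t}\left(H_{0}+z\right)^{-u}\right\|\leq 1$ to reduce to the scalar resolvent $\left(\Delta+z\right)^{-s}$, and it treats the $p=2$ endpoint simply by taking $s=0$ there.
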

		
		\begin{proof}
			Let $z\geq 1$. Then for $p\in\left(2,+\infty\right]$, $\frac{1}{p}+\frac{1}{q}=\frac{1}{2}$,
			$\frac{1}{q}+\frac{1}{q'}=1$, and $f\in L^2\left(\IR^{d},H^{r}\right)$,
			\begin{align}
				&\left\|\left(\Delta+z\right)^{-s}f\right\|_{L^{q}\left(\IR^{d},H^{r}\right)}\leq\left\|\left(\left|X\right|^{2}+z\right)^{-s}\widehat{f}\right\|_{L^{q'}\left(\IR^{d},H^{r}\right)}\nonumber\\
				\leq&\left(\frac{1}{\left(2\pi\right)^{d}}\int_{\IR^{d}}\left(\left|\xi\right|^{2}+z\right)^{-sp}\Id\xi\right)^{\frac{1}{p}}\left\|f\right\|_{L^2\left(\IR^{d},H^{r}\right)}\nonumber\\
				=&\left(2\pi\right)^{-pd}z^{\frac{d}{2p}-s}\left\|\xi\mapsto\langle\xi\rangle^{-2s}\right\|_{L^{p}\left(\IR^{d}\right)}\left\|f\right\|_{L^2\left(\IR^{d},H^{r}\right)}.
			\end{align}
		Consequently for $s>\frac{d}{2p}$ there is a constant $C<\infty$, such that 
		\begin{align}
			\left\|\left(\Delta+z\right)^{-s}\right\|_{L^2\left(\IR^{d},H^{r}\right)\to L^{q}\left(\IR^{d},H^{r}\right)}\leq Cz^{\frac{d}{2t}-s}=o\left(1\right),\ z\to+\infty.
		\end{align}
		For $p=2$ we obtain
		\begin{align}
			\left\|\left(\Delta+z\right)^{-s}\right\|_{B\left(L^2\left(\IR^{d},H^{r}\right)\right)}&=\left\|\left(\left|X\right|^2+z\right)^{-s}\right\|_{B\left(L^2\left(\IR^{d},H^{r},\frac{1}{\left(2\pi\right)^{d}}d\xi\right)\right)}=\esssup_{\xi\in\IR^{d}}\left(\left|\xi\right|^2+z\right)^{-s}\nonumber\\
			&=z^{-s}=\begin{cases}o\left(1\right),\ z\to+\infty,&s>0,\\
				1,&s=0.			
		\end{cases}
		\end{align}
		If $d\geq 3$ and $2s\geq\lceil\frac{d}{p}\rceil$, then the Sobolev inequality (Theorem 4.12 Part III, \cite{Adams}) yields a constant $C<\infty$, such that for $z\geq 1$,
		\begin{align}
			\left\|\left(\Delta+z\right)^{-s}\right\|_{L^2\left(\IR^{d},H^{r}\right)\to L^{q}\left(\IR^{d},H^{r}\right)}\leq C.
		\end{align}		
		This result is only interesting for $\frac{d}{p}\in\IN$, otherwise it is superseded by previous estimates.
		
		We also note that by the commuting functional calculi of $\Delta$ and $\widehat{A_{0}}$, that for $s,t\geq 0$, $s+t=u$,
		 \begin{align}
		 	\left\|\left(\Delta+z\right)^{s}\left(\widehat{A_{0}}^{2}+z\right)^{t}\left(H_{0}+z\right)^{-u}\right\|_{B\left(L^2\left(\IR^{d},H^{r}\right)\right)}\leq 1,\ z\geq 1,
		 \end{align}
		Consequently we obtain for $s,t\geq 0$, $s+t=u$, by Lemma \ref{holderdomlem} and the assumptions in this Lemma,
		\begin{align}
			&\left\|B\left(X\right)\left(H_{0}+z\right)^{-u}\right\|_{B\left(L^2\left(\IR^{d},H^{r}\right)\right)}\nonumber\\
			\leq&\left\|B\left(X\right)\left(\widehat{A_{0}}^{2}+z\right)^{-t}\right\|_{L^{q}\left(\IR^{d},H^{r}\right)\to L^2\left(\IR^{d},H^{r}\right)}\left\|\left(\Delta+z\right)^{-s}\right\|_{L^2\left(\IR^{d},H^{r}\right)\to L^{q}\left(\IR^{d},H^{r}\right)}\nonumber\\
			=&\left\|x\mapsto\left\|B\left(x\right)\left(A_{0}^{2}+z\right)^{-t}\right\|_{B\left(H^{r}\right)}\right\|_{L^{p}\left(\IR^{d}\right)}\nonumber\\
			&\cdot\begin{cases}
				o\left(1\right),&s>\frac{d}{2p},\\
				C,&\left(s=0,p=2\right)\vee\left(d\geq 3,\frac{d}{p}\in\IN,s=\frac{d}{2p}\right),
			\end{cases}\ z\to+\infty,\nonumber\\
		=&o\left(1\right),\ z\to+\infty.
		\end{align}
	\end{proof}

The next ingredient to discuss perturbations of $H_{0}$-powers is a Leibniz rule for multiplication operators $B\left(X\right)$ in the next statement. The proof is standard, we however give it for self sufficiency in this operator-valued setup.

\begin{Lemma}\label{leibnizlem}
	Let $B\in W^{N,2,End}_{loc}\left(\IR^{d},\left(\mathcal{D},H^{r}\right)\right)$. for some $N\in\IN$. If $f\in C_{c}^{\infty}\left(\IR^{d}\right)\otimes\mathcal{D}$, then $B\left(X\right)f\in\dom\left(\p^{\gamma}\right)= W^{\left|\gamma\right|,2}\left(\IR^{d},H^{r}\right)$ for $\gamma\in\IN^{d}$, $\left|\gamma\right|\leq N$, and
	\begin{align}
		\p^{\gamma}B\left(X\right)f=\sum_{\delta\leq\gamma}\binom{\gamma}{\delta}\left(\p^{\delta,End}B\right)\left(X\right)\p^{\gamma-\delta}f.
	\end{align}
\end{Lemma}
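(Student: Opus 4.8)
The plan is to reduce to the scalar case $d=1$ by an induction on $|\gamma|$, and to exchange Bochner integration with the (densely defined) operator family $B(x)$ using the defining property of $\partial^{\delta,End}B$ from Definition \ref{opdiffdef}. First I would record that it suffices to prove the statement for $f=g\otimes\psi$ with $g\in C_c^\infty(\IR^d)$ and $\psi\in\mathcal D$, since both sides are linear in $f$ and $C_c^\infty(\IR^d)\otimes\mathcal D$ spans by definition; then $B(X)f$ is the $H^r$-valued function $x\mapsto g(x)B(x)\psi$, which lies in $L^2(\IR^d,H^r)$ by the argument in the proof of Lemma \ref{densemultiplierdomlem} (the support of $g$ is compact, $B\in L^{2,End}_{loc}$). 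The heart of the matter is then the identity of weak derivatives
\begin{align}
	\partial^\gamma\bigl(x\mapsto g(x)B(x)\psi\bigr)=\sum_{\delta\leq\gamma}\binom{\gamma}{\delta}\,\partial^{\gamma-\delta}g\cdot\bigl(\partial^{\delta,End}B\bigr)(x)\psi,
\end{align}
where $\partial^\gamma$ on the left is the distributional (weak) derivative of an $H^r$-valued locally $L^2$ function; once this is established, membership of the right-hand side in $L^2(\IR^d,H^r)$ (again by compact support and the local-$L^2$ hypothesis on each $\partial^{\delta,End}B$, $|\delta|\leq N$) gives $B(X)f\in W^{|\gamma|,2}(\IR^d,H^r)$ together with the claimed formula, since for a Bochner-$L^2_{loc}$ function having all weak derivatives up to order $k$ in $L^2$ is equivalent to lying in $W^{k,2}$.

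To prove the displayed weak-derivative identity I would test against an arbitrary scalar $\varphi\in C_c^\infty(\IR^d)$ and against a fixed $\eta\in H^r$, i.e.\ pair with $\overline{\varphi(x)}\,\eta$ and integrate. On the left this gives, by definition of the weak derivative, $(-1)^{|\gamma|}\int_{\IR^d}\partial^\gamma\varphi(x)\,g(x)\langle B(x)\psi,\eta\rangle\,\Id x$. Now I would apply the classical scalar Leibniz rule to $\partial^\gamma(\varphi g)$ inside the integral — legitimate because $g,\varphi\in C_c^\infty$ — move one term of $\partial^{\gamma-\delta}$ onto $\varphi$ at the cost of a sign, and in each resulting term $(-1)^{|\delta|}\int_{\IR^d}\partial^\delta\bigl(\partial^{\gamma-\delta}\varphi\cdot g\bigr)(x)\langle B(x)\psi,\eta\rangle\,\Id x$ recognise, via the $C_c^\infty$ test function $h:=\partial^{\gamma-\delta}\varphi\cdot g\in C_c^\infty(\IR^d)$ and the vector $h\otimes\psi\in C_c^\infty(\IR^d)\otimes\mathcal D$, exactly the defining relation of $(\partial^{\delta,End}B)$ paired with $\eta$. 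Assembling these identifications reproduces the pairing of the right-hand side with $\overline\varphi\,\eta$; since $\varphi\in C_c^\infty(\IR^d)$ and $\eta\in H^r$ are arbitrary and $H^r$ is separable, the two $H^r$-valued functions agree a.e., which is the claim.

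The main obstacle is bookkeeping rather than anything deep: one has to be careful that the combinatorial reindexing of the scalar Leibniz expansion — first expand $\partial^\gamma(\varphi g)=\sum_{\delta\le\gamma}\binom{\gamma}{\delta}\partial^{\gamma-\delta}\varphi\,\partial^\delta g$, then recognise the $\partial^\delta g$ part together with $B$ as $(\partial^{\delta,End}B)$ — produces precisely the binomial coefficients $\binom{\gamma}{\delta}$ in the final formula, with no double counting, and that the multi-index signs $(-1)^{|\gamma-\delta|}$ and $(-1)^{|\delta|}$ combine correctly to $(-1)^{|\gamma|}$ at each stage. It is cleanest to do this by induction on $|\gamma|$: the base case $|\gamma|=1$ is Definition \ref{opdiffdef} plus the scalar product rule, and the inductive step applies a single first-order weak derivative to the order-$(|\gamma|-1)$ identity, which is already known to hold in $L^2_{loc}$, again invoking Definition \ref{opdiffdef} termwise on $(\partial^{\delta,End}B)$ (whose weak derivatives up to the remaining order exist and lie in $L^2_{loc}$ because $B\in W^{N,2,End}_{loc}$ and $|\delta|<N$). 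A minor point to mention explicitly is that all the Bochner integrals appearing converge in $H^r$, which is immediate since every integrand is compactly supported and locally $L^1$ in norm by Cauchy–Schwarz and the $L^{2,End}_{loc}$ assumptions.
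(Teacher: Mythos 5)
Your proposal is correct and follows essentially the same route as the paper: induction on the order of differentiation, computing the distributional derivative by testing against $C_{c}^{\infty}$ functions, invoking the defining relation of Definition \ref{opdiffdef} on the test function $h\otimes\psi$ (which also yields the needed composition $\p_{x^{j}}^{End}\p^{\delta,End}B=\p^{\delta+\delta_{j},End}B$), and deducing $W^{\left|\gamma\right|,2}$-membership from compact support and the local $L^{2}$ bounds as in Lemma \ref{densemultiplierdomlem}. The only cosmetic difference is that you reduce to elementary tensors and pair with scalar test functions times a fixed $\eta\in H^{r}$, while the paper works with $H^{r}$-valued distributions and Bochner integrals directly; both are sound.
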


\begin{proof}

	We proceed by induction on $N$. The statements of the Lemma hold for $\left|\gamma\right|=0$, by Lemma \ref{densemultiplierdomlem}. Let us assume the statements are true for $n\leq N$. Let $\left|\widehat{\gamma}\right|=N+1$. Then there is $\gamma\in\IN^{d}$, and $j\in\left\{1,\ldots,d\right\}$, such that $\widehat{\gamma}=\gamma+\delta_{j}$. Since by assumption $B\left(X\right) f\in W^{N,2}\left(\IR^{d},H^{r}\right)$, it suffices to check that $\p^{\gamma}B\left(X\right)f\in W^{1,2}\left(\IR^{d},H^{r}\right)$ to conclude $B\left(X\right)f\in W^{N+1,2}\left(\IR^{d},H^{r}\right)$. Also by assumption we know that
	\begin{align}\label{leibnizlemeq0}
		\p^{\gamma}B\left(X\right)f=\sum_{\delta\leq\gamma}\binom{\gamma}{\delta}\left(\p^{\delta,End}B\right)\left(X\right)\p^{\gamma-\delta}f.
	\end{align}
	Since $\left(\p^{\delta,End}B\right)\left(X\right)\p^{\gamma-\delta}f\in L^2\left(\IR^{d},\H^{r}\right)$ by Lemma \ref{densemultiplierdomlem}, we may consider\break$\left(\p^{\delta,End}B\right)\left(X\right)\p^{\gamma-\delta}f$ as a $H^{r}$-valued distribution. As such, for any $\phi\in C_{c}^{\infty}\left(\IR^{d}\right)$,
	\begin{align}\label{leibnizlemeq1}
		&\left(\p_{x_{j}}\left(\p^{\delta,End}B\right)\left(X\right)\p^{\gamma-\delta}f\right)\left[\phi\right]=-\int_{\IR^{d}}\left(\p^{\delta,End}B\right)\left(x\right)\left(\p^{\gamma-\delta}f\right)\left(x\right)\left(\p_{x_{i}}\phi\right)\left(x\right)\Id x\\
		=&\left(\left(\p^{\delta,End}B\right)\left(X\right)\p^{\gamma-\delta+\delta_{j}}f\right)\left[\phi\right]
		-\int_{\IR^{d}}\left(\p^{\delta,End}B\right)\left(x\right)\left(\p_{x_{i}}\left(\left(\p^{\gamma-\delta}f\right)\phi\right)\right)\left(x\right)\Id x\\
		=&\left(\left(\p^{\delta,End}B\right)\left(X\right)\p^{\gamma-\delta+\delta_{j}}f\right)\left[\phi\right]+\left(\left(\p^{\delta+\delta_{j},End}B\right)\left(X\right)\p^{\gamma-\delta}f\right)\left[\phi\right].
	\end{align}
	The above steps are justified by Lemma \ref{densemultiplierdomlem} and the fact that for any $\eta\in\IN^{d}$ the derivative $\p^{\eta}f\in C_{c}^{\infty}\left(\IR^{d}\right)\otimes\mathcal{D}$, and $\p^{\delta,End}B,\ \p^{\delta+\delta_{j},End}B\in L^{2,End}_{loc}\left(\IR^{d},\left(\mathcal{D},H^{r}\right)\right)$. This argument also yields
	\begin{align}
		\left(\p^{\delta,End}B\right)\left(X\right)\p^{\gamma-\delta+\delta_{j}}+\left(\p^{\delta+\delta_{j},End}B\right)\left(X\right)\p^{\gamma-\delta}f\in L^2\left(\IR^{d},H^{r}\right),
	\end{align}
	which implies by (\ref{leibnizlemeq1}) that
	\begin{align}
		\left(\p^{\delta,End}B\right)\left(X\right)\p^{\gamma-\delta}f\in W^{1,2}\left(\IR^{d},H^{r}\right),
	\end{align}
	and thus by (\ref{leibnizlemeq0}), $\p^{\gamma}B\left(X\right)f\in W^{1,2}\left(\IR^{d},H^{r}\right)$. Finally we combine (\ref{leibnizlemeq0}) and (\ref{leibnizlemeq1}) to conclude
	\begin{align}
		\p^{\widehat{\gamma}}B\left(X\right)f&=\sum_{\delta\leq\gamma}\binom{\gamma}{\delta}\left(\left(\p^{\delta+\delta_{j},End}B\right)\left(X\right)\p^{\gamma-\delta}f+\left(\p^{\delta,End}B\right)\left(X\right)\p^{\gamma-\delta+\delta_{j}}f\right)\\
		&=\sum_{\beta\leq\widehat{\gamma}}\binom{\widehat{\gamma}}{\delta}\left(\p^{\delta,End}B\right)\left(X\right)\p^{\widehat{\gamma}-\delta}f.
	\end{align}
\end{proof}

The next Lemma is closely related with the Leibniz rule in Lemma \ref{leibnizlem}, however its usefulness will become only apparent in the next section, when we discuss trace-class properties of a certain class of operators in $L^2\left(\IR^{d},H^{r}\right)$. Its rough content is that differentiability of an operator family $B$ enables us to extract smoothing operators in $L^2\left(\IR^{d},H^{r}\right)$ from the left of the multiplication operator $B\left(X\right)$.

\begin{Lemma}\label{leibnizsplitlem}
	Let $B\in W^{2l+k,2,End}_{loc}\left(\IR^{d},\left(\mathcal{D},H^{r}\right)\right)$ for $l\in\IN$ and $k\in\left\{0,1\right\}$. Then there are constant matrices
	\begin{align}
		C^{k,l}_{\gamma,\delta}\in\left\{\lambda_{0}1_{H^{r}}+\sum_{j=1}^{d}\lambda_{j}c\left(\Id x^{j}\right),\ \lambda_{j}\in\IZ+i\IZ,\ j\in\left\{0,\ldots,d\right\}\right\},\ \gamma,\delta\in\IN^{d},
	\end{align}
	such that for $f\in C_{c}^{\infty}\left(\IR^{d}\right)\otimes\mathcal{D}$,
	\begin{align}
		B\left(X\right)f=\left(ic\nabla+1\right)^{-k}\left(\Delta+1\right)^{-l}\sum_{\stackrel{\gamma,\delta\in\IN^{d}}{\left|\gamma+\delta\right|\leq 2l+k}}C_{\gamma,\delta}^{k,l}\left(\p^{\gamma,End}B\right)\left(X\right)\p^{\delta}f,
	\end{align}
\end{Lemma}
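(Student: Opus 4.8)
The plan is to invert the Leibniz rule of Lemma \ref{leibnizlem}. The idea is that, for $f\in C_c^\infty(\IR^d)\otimes\mathcal D$, the element $B(X)f$ lies in $W^{2l+k,2}(\IR^d,H^r)=\dom\bigl((ic\nabla+1)^k(\Delta+1)^l\bigr)$, so we may apply $(ic\nabla+1)^k(\Delta+1)^l$ to it and then peel this operator back off from the left by its bounded inverse. The key point is that the resulting expression can be re-expanded, using the Leibniz rule again, into a finite sum of terms of the form $(\text{constant matrix})\cdot(\p^{\gamma,End}B)(X)\,\p^\delta f$ with $|\gamma+\delta|\le 2l+k$.

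Concretely I would argue as follows. First, since $B\in W^{2l+k,2,End}_{\loc}$, Lemma \ref{leibnizlem} gives $B(X)f\in W^{2l+k,2}(\IR^d,H^r)$, hence $B(X)f\in\dom\bigl((ic\nabla+1)^k(\Delta+1)^l\bigr)$ because $\dom(\langle c\nabla\rangle^m)=W^{m,2}(\IR^d,H^r)$ and $\Delta+1=\langle c\nabla\rangle^2$. Therefore
\begin{align}
	B(X)f=(ic\nabla+1)^{-k}(\Delta+1)^{-l}\Bigl((ic\nabla+1)^k(\Delta+1)^l\,B(X)f\Bigr),
\end{align}
the inverse operators being bounded on $L^2(\IR^d,H^r)$. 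It remains to compute $(ic\nabla+1)^k(\Delta+1)^l B(X)f$. Now $\Delta=-\sum_j\p_{x^j}^2$ and $ic\nabla+1=i\sum_j c(\Id x^j)\p_{x^j}+1$ are constant-coefficient differential operators of orders $2l+k$ in total; writing $(ic\nabla+1)^k(\Delta+1)^l=\sum_{|\mu|\le 2l+k}P_\mu\,\p^\mu$ with $P_\mu$ a polynomial in the $c(\Id x^j)$'s with Gaussian-integer coefficients (this is where the stated form of the matrices $C^{k,l}_{\gamma,\delta}$ — integer combinations of $1_{H^r}$ and the $c(\Id x^j)$ — comes from, after noting products of an even/odd number of Clifford generators reduce via the Clifford relations, though one may equally just allow the $P_\mu$ to be arbitrary products and only invoke the final linear form after collecting), we apply $\p^\mu$ to $B(X)f$ via Lemma \ref{leibnizlem}:
\begin{align}
	\p^\mu B(X)f=\sum_{\delta\le\mu}\binom{\mu}{\delta}\bigl(\p^{\delta,End}B\bigr)(X)\,\p^{\mu-\delta}f.
\end{align}
Substituting, reindexing by $\gamma:=\delta$ and the new $\delta:=\mu-\delta$, and absorbing the combinatorial coefficients $\binom{\mu}{\delta}$ together with the matrix coefficients of $P_\mu$ into the matrices $C^{k,l}_{\gamma,\delta}$ yields exactly the claimed identity, with $|\gamma+\delta|\le|\mu|\le 2l+k$. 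The coefficient matrices are $\IZ+i\IZ$-combinations of $1_{H^r}$ and the Clifford generators because the only ingredients are integer binomial coefficients, the integer coefficient $i$ in $ic\nabla$, and products of the $c(\Id x^j)$; using the Clifford relation $c(\Id x^k)c(\Id x^l)+c(\Id x^l)c(\Id x^k)=-2\delta_{kl}$ every monomial in the generators can be rewritten, and in the end (as the statement only asserts membership in that set, not a canonical form) one verifies each $C^{k,l}_{\gamma,\delta}$ has the asserted shape.

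The only genuine subtlety — and the step I would treat most carefully — is the bookkeeping that shows all the matrix/scalar factors collapse into the narrow class described, and in particular that applying $(ic\nabla+1)^k$ does not generate higher-degree Clifford words that fail to reduce to the affine form $\lambda_0 1_{H^r}+\sum_j\lambda_j c(\Id x^j)$. For $k=0$ this is vacuous since $(\Delta+1)^l$ is scalar, so every $C^{0,l}_{\gamma,\delta}$ is just an integer multiple of $1_{H^r}$; for $k=1$ one has a single factor $ic\nabla+1$ out front, contributing either $1_{H^r}$ or one generator $i\,c(\Id x^j)$, so again the affine form is preserved — no products of generators ever appear at the level of the final coefficients because $(\Delta+1)^l$ contributes only scalars. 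Thus the reduction is in fact immediate once one observes that $k\le 1$. Everything else is the routine Leibniz expansion above, valid termwise since each $\p^{\delta,End}B\in L^{2,End}_{\loc}$ and each $\p^{\mu-\delta}f\in C_c^\infty(\IR^d)\otimes\mathcal D$, so that every multiplication $(\p^{\delta,End}B)(X)\p^{\mu-\delta}f$ lies in $L^2(\IR^d,H^r)$ by Lemma \ref{densemultiplierdomlem} and all manipulations take place inside $L^2(\IR^d,H^r)$.
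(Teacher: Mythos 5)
Your proposal is correct and follows essentially the same route as the paper: apply $\left(ic\nabla+1\right)^{k}\left(\Delta+1\right)^{l}$ to $B\left(X\right)f$ (legitimate since Lemma \ref{leibnizlem} puts $B\left(X\right)f$ in $W^{2l+k,2}\left(\IR^{d},H^{r}\right)=\dom\left(\langle c\nabla\rangle^{2l+k}\right)$), expand by the Leibniz rule, and peel the operator back off with its bounded inverse. Your observation that $k\leq 1$ is what keeps the coefficient matrices in the affine Clifford form is exactly the point implicit in the paper's expansion, so nothing is missing.
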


\begin{proof}
	The conditions on $B$ imply by Lemma \ref{leibnizlem} that for any $\gamma\in\IN^{d}$, $\left|\gamma\right|\leq 2l+k$,
	\begin{align}
		B\left(X\right)f\in\dom\left(\p^{\gamma}\right).
	\end{align}
	On the other hand we may expand $\left(c\nabla+i\right)^{k}\left(\Delta+1\right)^{l}$ on $C_{c}^{\infty}\left(\IR^{d}\right)\otimes\mathcal{D}$ to find matrices
	\begin{align}
		c^{k,l}_{\gamma,\delta}\in\left\{\lambda_{0}1_{H^{r}}+\sum_{j=1}^{d}\lambda_{j}c\left(\Id x^{j}\right),\ \lambda_{j}\in\IZ+i\IZ,\ j\in\left\{0,\ldots,d\right\}\right\},
	\end{align}
	such that
	\begin{align}
		\left(c\nabla+i\right)^{k}\left(\Delta+1\right)^{l}g=\sum_{\stackrel{\alpha\in\IN^{d}}{\left|\alpha\right|\leq 2l+k}}c_{\alpha}^{k,l}\p^{\alpha}g,\ g\in C_{c}^{\infty}\left(\IR^{d}\right)\otimes\mathcal{D}.
	\end{align}
	Thus $B\left(X\right)f\in\dom\left(\left(c\nabla+i\right)^{k}\left(\Delta+1\right)^{l}\right)$, and by Lemma \ref{leibnizlem},
	\begin{align}
		\left(\left(c\nabla+i\right)^{k}\left(\Delta+1\right)^{l}\right)B\left(X\right)f=\sum_{\stackrel{\gamma\in\IN^{d}}{\left|\gamma\right|\leq 2l+k}}c_{\alpha}^{k,l}\sum_{\delta\leq\gamma}\binom{\gamma}{\delta}\left(\p^{\delta,End}B\right)\left(X\right)\p^{\gamma-\delta}f
	\end{align}
	By applying $\left(c\nabla+i\right)^{-k}\left(\Delta+1\right)^{-l}$ to the left, and using that the resolvents of $c\nabla$ and $\Delta$ commute, we obtain the claimed statement with $C^{k,l}_{\gamma,\delta}=\binom{\gamma+\delta}{\gamma}c_{\gamma}^{k,l}$.
\end{proof}
%
%
	
	We arrive at the fundamental result of this section which establishes conditions on the operator family $A$ such that the Callias operator $D$ is closed and $\dom\left(\left(D^{\ast}D\right)^{\frac{N}{2}}\right)=\dom\left(\left(DD^{\ast}\right)^{\frac{N}{2}}\right)=\dom\left(H_{0}^{\frac{N}{2}}\right)$, for given $N\in\IN$.
	
		\begin{Proposition}\label{hdomainprop}
		Let $N\in\IN$, $N\geq 1$, and $A\in W^{N-1,2,End}_{loc}\left(\IR^{d},\left(\mathcal{D},H^{r}\right)\right)$. Assume for $\gamma\in\IN^{d}$, $1\leq\left|\gamma\right|\leq N-1$, there exist $t\in\left[0,\frac{\left|\gamma\right|}{2}\right]$ and $p\in\left[2,+\infty\right]$ such that
		\begin{align}
			\left\|x\mapsto\left\|\left(\p^{\gamma,End}A\right)\left(x\right)\langle A_{0}\rangle^{-2t}\right\|_{B\left(H^{r}\right)}\right\|_{L^{p}\left(\IR^{d}\right)}<\infty,\ \text{for }\frac{\left|\gamma\right|}{2}-t>\frac{d}{2p},
		\end{align}
		or
		\begin{align}
			&\left\|x\mapsto\left\|\left(\p^{\gamma,End}A\right)\left(x\right)\left(A_{0}^{2}+z\right)^{-t}\right\|_{B\left(H^{r}\right)}\right\|_{L^{p}\left(\IR^{d}\right)}=o\left(1\right),\ z\to+\infty,\nonumber\\
			&\text{for }\left(\frac{\left|\gamma\right|}{2}=t, p=2\right)\vee\left(d\geq 3,\frac{d}{p}\in\IN,\frac{\left|\gamma\right|}{2}-t=\frac{d}{2p}\right),
		\end{align}
		As well as
		\begin{align}\label{hdomainpropeq0}
			\esssup_{x\in\IR^{d}}\left\|A_{0}^{n}\left(A\left(x\right)-A_{0}\right)\left(A_{0}^{2}+z\right)^{-\frac{n+1}{2}}\right\|_{B\left(H^{r}\right)}=o\left(1\right),\ z\to+\infty,\ 0\leq n\leq N-1.
		\end{align}
		Denote for $j\in\left\{0,1\right\}$ the operators $D_{j}=\left(-1\right)^{j}ic\nabla+A\left(X\right)$, and $D_{j,0}=\left(-1\right)^{j}ic\nabla+\widehat{A_{0}}$. Then for any $\eta\in\left\{0,1\right\}^{N}$, the operator $\prod_{k=1}^{N}D_{\eta_{k}}$ is closed with		
		\begin{align}
			\dom\left(\prod_{k=1}^{N}D_{\eta_{k}}\right)&=\dom\left(\prod_{k=1}^{N}D_{\eta_{k},0}\right)=\dom\left(H_{0}^{\frac{N}{2}}\right)\nonumber\\
			&=W^{N,2}\left(\IR^{d},H^{r}\right)\cap  L^2\left(\IR^{d},\dom\left(A_{0}^{N}\right)\right).
		\end{align}
		Additionally $\prod_{k=1}^{N}D_{\eta_{k}}$ is self-adjoint if it is symmetric.
	\end{Proposition}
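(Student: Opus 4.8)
The plan is to isolate the constant-coefficient model operators $D_{j,0}$, to control the perturbation $V(X):=A(X)-\widehat{A_0}$ using the lemmas already proved, and then to induct on $N$. \emph{Model product first.} Since $c\nabla$ and $\widehat{A_0}$ have commuting functional calculi and $D_{j,0}^{\ast}D_{j,0}=D_{j,0}D_{j,0}^{\ast}=H_{0}$ for $j\in\{0,1\}$ (the case $j=1$ because $D_{1,0}=D_{0,0}^{\ast}$), each $D_{j,0}$ is normal with $|D_{j,0}|=H_{0}^{1/2}$; as $\ker H_{0}=\ker\Delta\cap\ker\widehat{A_0}^{2}=\{0\}$, the polar part $U_{j}$ of $D_{j,0}$ is unitary and commutes with $H_{0}$, hence with every $H_{0}^{s}$ and with $(H_{0}+z)^{-1}$, and preserves each $\dom(H_{0}^{s})$. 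An induction on $N$ then gives $\prod_{k=1}^{N}D_{\eta_k,0}=\bigl(\prod_{k=1}^{N}U_{\eta_k}\bigr)H_{0}^{N/2}$ on its natural domain $\dom(H_{0}^{N/2})$, so this operator is closed with graph norm equivalent to $\|(H_{0}+1)^{N/2}\cdot\|$, and $\dom(H_{0}^{N/2})=W^{N,2}(\IR^{d},H^{r})\cap L^{2}(\IR^{d},\dom(A_{0}^{N}))$ follows from $H_{0}=\Delta+\widehat{A_0}^{2}$, the joint spectral theorem, and the scalar comparison $(a^{2}+|\xi|^{2})^{N}\asymp a^{2N}+|\xi|^{2N}$.

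\emph{Perturbation bounds.} The family $x\mapsto A(x)-A_{0}$ lies in $W^{N-1,2,End}_{loc}(\IR^{d},(\mathcal{D},H^{r}))$ with $\p^{\gamma,End}(A-A_{0})=\p^{\gamma,End}A$ for $|\gamma|\ge 1$. Applying Lemma \ref{sobemblem} to $B=\p^{\gamma,End}A$ with $u=|\gamma|/2$, $1\le|\gamma|\le N-1$, turns the two alternatives in the statement into $\|(\p^{\gamma,End}A)(X)(H_{0}+z)^{-|\gamma|/2}\|_{B(L^2(\IR^d,H^r))}\to 0$ as $z\to+\infty$, while (\ref{hdomainpropeq0}) with $p=2$ gives $\|\widehat{A_0}^{n}V(X)(H_{0}+z)^{-(n+1)/2}\|\to 0$ for $0\le n\le N-1$. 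Combining these with the Leibniz rule (Lemma \ref{leibnizlem}) on the core $C_{c}^{\infty}(\IR^{d})\otimes\mathcal{D}$ and the elementary estimates $\langle\xi\rangle^{j},\,(a^{2}+1)^{j/2}\lesssim(1+|\xi|^{2}+a^{2})^{j/2}$ coming from the joint functional calculus of $\Delta$ and $\widehat{A_0}$, I would show that for each $1\le k\le N$ the multiplication operator $V(X)$ maps $\dom(H_{0}^{k/2})$ into $\dom(H_{0}^{(k-1)/2})$ with $\|(H_{0}+z)^{(k-1)/2}V(X)(H_{0}+z)^{-k/2}\|\to 0$ as $z\to+\infty$; first on the core, then extended by density, using that $C_{c}^{\infty}(\IR^{d})\otimes\mathcal{D}$ is a core for $H_{0}^{k/2}$ (Lemma \ref{densesetlem}).

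\emph{Induction and self-adjointness.} For $N=1$, $D_{\eta_1}=D_{\eta_1,0}+V(X)$ with $V(X)$ relatively $D_{\eta_1,0}$-bounded with relative bound $0$, so $D_{\eta_1}$ is closed on $\dom(H_{0}^{1/2})$, and $D_{\eta_1}^{\ast}=D_{1-\eta_1}$ since $V(X)$ is symmetric. For the step, fix $\eta\in\{0,1\}^{N}$; using the hypothesis for $(\eta_{1},\dots,\eta_{N-1})$, the model product, and the previous estimate, one gets $\dom(H_{0}^{N/2})\subseteq\dom(\prod_{k=1}^{N}D_{\eta_k})$ and, on $\dom(H_{0}^{N/2})$, $\prod_{k=1}^{N}D_{\eta_k}=\prod_{k=1}^{N}D_{\eta_k,0}+R_{\eta}$ with $\|R_{\eta}(H_{0}+z)^{-N/2}\|\to 0$ as $z\to+\infty$, each summand of $R_{\eta}$ being reduced---after writing $\prod_{k<N}D_{\eta_k,0}=(\text{unitary})H_{0}^{(N-1)/2}$ and inserting resolvents---to an $o(1)$ factor $(H_{0}+z)^{(k-1)/2}V(X)(H_{0}+z)^{-k/2}$ times uniformly bounded pieces, or to an induction-hypothesis remainder. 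Choosing $z_{0}$ with $\|R_{\eta}(H_{0}+z_{0})^{-N/2}\|<1$ makes $R_{\eta}$ a relative perturbation of $\prod D_{\eta_k,0}$ with relative bound $<1$ (their graph norms being $\asymp\|(H_{0}+z_{0})^{N/2}\cdot\|$), so $T_{1,\eta}:=\prod D_{\eta_k,0}+R_{\eta}$ is closed on $\dom(H_{0}^{N/2})$. That the natural product $T_{\eta}:=\prod_{k=1}^{N}D_{\eta_k}$ equals $T_{1,\eta}$ follows by a squeeze: $T_{\eta}\supseteq T_{1,\eta}$ gives $T_{\eta}^{\ast}\subseteq T_{1,\eta}^{\ast}$, while $T_{\eta}^{\ast}\supseteq D_{1-\eta_{N}}\cdots D_{1-\eta_{1}}\supseteq T_{1,\tilde\eta}$ for the reversed-and-flipped tuple $\tilde\eta$ ($\tilde\eta_k=1-\eta_{N+1-k}$); since $T_{1,\eta}^{\ast}$ and $T_{1,\tilde\eta}$ both have domain $\dom(H_{0}^{N/2})$ (the former by the same perturbation argument applied to $\prod D_{\eta_k,0}^{\ast}+R_{\eta}^{\ast}$, using $\|\prod D_{\eta_k,0}^{\ast}f\|=\|H_{0}^{N/2}f\|=\|\prod D_{\eta_k,0}f\|$), one gets $T_{\eta}^{\ast}=T_{1,\tilde\eta}$, hence $\overline{T_{\eta}}=T_{\eta}^{\ast\ast}=T_{1,\tilde\eta}^{\ast}=T_{1,\eta}$, which with $T_{\eta}\supseteq T_{1,\eta}$ forces $T_{\eta}=T_{1,\eta}$. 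Finally, if $T_{\eta}$ is symmetric, then $T_{\eta}\subseteq T_{\eta}^{\ast}=T_{\tilde\eta}$, and both being closed with domain $\dom(H_{0}^{N/2})$ they coincide, so $T_{\eta}$ is self-adjoint.

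\emph{Main obstacle.} The crux is the perturbation step and its bookkeeping in the induction: commuting the Dirac factors $\pm ic\nabla$ past $V(X)$ via the Leibniz rule so that every resulting summand carries exactly one $o(1)$ resolvent-normalised factor $(H_{0}+z)^{(k-1)/2}V(X)(H_{0}+z)^{-k/2}$ (or $\widehat{A_0}^{n}V(X)(H_{0}+z)^{-(n+1)/2}$), while all other factors are controlled uniformly through the joint functional calculus of $\Delta$ and $\widehat{A_0}$. Secondary care is needed to check that $C_{c}^{\infty}(\IR^{d})\otimes\mathcal{D}$ is a common core (via Lemma \ref{densesetlem}) and that the perturbation-theoretic adjoint identities in the last step are legitimate---the reason for phrasing the key estimates symmetrically in $(H_{0}+z)^{\pm N/2}$.
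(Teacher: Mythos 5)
Your proposal is correct and follows essentially the same route as the paper: identify $\dom\left(H_{0}^{\frac{N}{2}}\right)$ with $W^{N,2}\left(\IR^{d},H^{r}\right)\cap L^2\left(\IR^{d},\dom\left(A_{0}^{N}\right)\right)$ via the joint functional calculus of $\Delta$ and $\widehat{A_{0}}$, convert the hypotheses into the relative bounds $\left\|\left(H_{0}+z\right)^{\frac{k-1}{2}}\left(A\left(X\right)-\widehat{A_{0}}\right)\left(H_{0}+z\right)^{-\frac{k}{2}}\right\|=o\left(1\right)$ using Lemma \ref{sobemblem}, the Leibniz rule, and (\ref{hdomainpropeq0}), and conclude by Kato--Rellich; the paper expands $\prod_{k}D_{\eta_{k}}-\prod_{k}D_{\eta_{k},0}$ into the $2^{N}-1$ cross terms in one shot rather than inducting on $N$, which amounts to the same telescoping estimate. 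Your extra adjoint/squeeze argument identifying the natural composition product with the Kato--Rellich perturbed operator on $\dom\left(H_{0}^{\frac{N}{2}}\right)$ is a worthwhile point of care that the paper leaves implicit.
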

	
	\begin{proof}
		We first note that by Lemma \ref{sobemblem} we have $\dom\left(\left(\p^{\gamma,End}A\right)\left(X\right)\right)\supseteq\dom\left(H_{0}^{\frac{\left|\gamma\right|+1}{2}}\right)$ for $\gamma\in\IN^{d}$, $1\leq\left|\gamma\right|\leq N-1$, and
		\begin{align}
			\left\|\left(\p^{\gamma,End}A\right)\left(X\right)\left(H_{0}+z\right)^{-\frac{\left|\gamma\right|+1}{2}}\right\|_{B\left(L^2\left(\IR^{d},H^{r}\right)\right)}=o\left(1\right),\ z\to+\infty.
		\end{align}
	
		Let $n\in\IN$ and let $\phi\in C_{c}^{\infty}\left(\IR^{d}\right)\otimes\mathcal{D}$. Then, if $\widehat{\phi}$ denotes the $H^{r}$-valued Fourier transform of $\phi$,
		\begin{align}\label{hdomainpropeq1}
			\left\|H_{0}^{\frac{n}{2}}\phi\right\|_{L^2\left(\IR^{d},H^{r}\right)}^{2}=\left(2\pi\right)^{-d}\int_{\IR^{d}}\int_{\sigma\left(A_{0}\right)}\left(\left|\xi\right|^2+\lambda^2\right)^{n}\langle \Id E_{A_{0}}\left(\lambda\right)\widehat{\phi}\left(\xi\right),\widehat{\phi}\left(\xi\right)\rangle_{H^{r}}\Id\xi.
		\end{align}
		Then after multiplying out the polynomial $\left(\left|\xi\right|^{2}+\lambda^{2}\right)^{n}$, by Young's inequality, there are constants $C_{n}$ such that
		\begin{align}
			\left|\xi\right|^{2n}+\lambda^{2n}\leq\left(\left|\xi\right|^{2}+\lambda^{2}\right)^{n}\leq C_{n}\left(\left|\xi\right|^{2n}+\lambda^{2n}\right),\ \xi\in\IR^{d},\ \lambda\in\IR.
		\end{align}
		Consequently, by (\ref{hdomainpropeq1}), we have
		\begin{align}\label{h0domaineq}
			\dom\left(H_{0}^{\frac{n}{2}}\right)=\dom\left(\langle c\nabla\rangle^{n}\right)\cap\dom\left(\widehat{A_{0}}^{n}\right)=W^{n,2}\left(\IR^{d},H^{r}\right)\cap  L^2\left(\IR^{d},\dom\left(A_{0}^{n}\right)\right),
		\end{align}
		where the equality also holds with respect to the topologies induced by graph norms. A similar argument, using the commuting Fourier transform and spectral resolution of $A_{0}$, shows that
		\begin{align}
			\left\|\left(\widehat{A_{0}}^2+z\right)^{\frac{n}{2}}\phi\right\|_{L^2\left(\IR^{d},H^{r}\right)}\leq\left\|\left(H_{0}+z\right)^{\frac{n}{2}}\phi\right\|_{L^2\left(\IR^{d},H^{r}\right)},\ z\geq 1.
		\end{align}
		With these preparations we find for $0\leq n\leq N-1$
		\begin{align}\label{hdomainpropeq2}
			&\left\|\widehat{A_{0}}^{n}\left(A\left(X\right)-\widehat{A_{0}}\right)\phi\right\|\nonumber\\
			\leq&\left\|\widehat{A_{0}}^{n}\left(A\left(X\right)-\widehat{A_{0}}\right)\left(\widehat{A_{0}}^{2}+z\right)^{-\frac{n+1}{2}}\right\|_{B\left(L^2\left(\IR^{d},H^{r}\right)\right)}\left\|\left(H_{0}+z\right)^{-\frac{n+1}{2}}\phi\right\|_{L^2\left(\IR^{d},H^{r}\right)}\nonumber\\
			=&\esssup_{x\in\IR^{d}}\left\|A_{0}^{n}\left(A\left(x\right)-A_{0}\right)\left(A_{0}^{2}+z\right)^{-\frac{n+1}{2}}\right\|_{B\left(H^{r}\right)}\left\|\left(H_{0}+z\right)^{-\frac{n+1}{2}}\phi\right\|_{L^2\left(\IR^{d},H^{r}\right)}.
		\end{align}	
		Furthermore there are constant matrices $C^{\gamma,\delta}$ in $\IC^{r\times r}$, such that
		\begin{align}
			&\left(c\nabla\right)^{n}\left(A\left(X\right)-\widehat{A_{0}}\right)\phi=\sum_{\left|\gamma+\delta\right|\leq n}C^{\gamma,\delta}\left(\p^{\gamma, End}\left(A-A_{0}\right)\right)\left(X\right)\p^{\delta}\phi\nonumber\\
			=&\sum_{\left|\gamma+\delta\right|\leq n}C^{\gamma,\delta}\left(\p^{\gamma, End}\left(A-A_{0}\right)\right)\left(X\right)\left(H_{0}+z\right)^{-\frac{\left|\gamma\right|+1}{2}}\p^{\delta}\left(H_{0}+z\right)^{-\frac{\left|\delta\right|}{2}}\left(H_{0}+z\right)^{\frac{\left|\gamma+\delta\right|+1}{2}}\phi.
		\end{align}
		We thus find a constant $C<\infty$, such that
		\begin{align}\label{hdomainpropeq3}
			&\left\|\left(c\nabla\right)^{n}\left(A\left(X\right)-\widehat{A_{0}}\right)\phi\right\|_{L^2\left(\IR^{d},H^{r}\right)}\nonumber\\
			\leq&C\sum_{1\leq\left|\gamma\right|\leq n}\left\|\left(\p^{\gamma, End}\left(A-A_{0}\right)\right)\left(X\right)\left(H_{0}+z\right)^{-\frac{\left|\gamma\right|+1}{2}}\right\|_{B\left(L^2\left(\IR^{d},H^{r}\right)\right)}\left\|\left(H_{0}+z\right)^{\frac{n+1}{2}}\phi\right\|_{L^2\left(\IR^{d},H^{r}\right)}\nonumber\\
			&+\esssup_{x\in\IR^{d}}\left\|\left(A\left(x\right)-\widehat{A_{0}}\right)\left(A_{0}^{2}+z\right)^{-\frac{1}{2}}\right\|_{B\left(H^{r}\right)}\left\|\left(H_{0}+z\right)^{\frac{1}{2}}\phi\right\|_{L^2\left(\IR^{d},H^{r}\right)}.
		\end{align}
		We combine (\ref{h0domaineq}), (\ref{hdomainpropeq2}), (\ref{hdomainpropeq3}), and the prerequisites of this proposition to state
		\begin{align}\label{hdomainpropeq4}
			\left\|H_{0}^{\frac{n}{2}}\left(A\left(X\right)-\widehat{A_{0}}\right)\left(H_{0}+z\right)^{-\frac{n+1}{2}}\right\|_{B\left(L^2\left(\IR^{d},H^{r}\right)\right)}=o\left(1\right),\ z\to+\infty,\ 0\leq n\leq N-1,
		\end{align}
		and since $\left\|\left(H_{0}+z\right)^{s}\left(H_{0}+1\right)^{-s}\right\|_{B\left(L^2\left(\IR^{d},H^{r}\right)\right)}\leq 1$, $z\geq 1$, $s\geq 0$, wich follows by the functional calculi of $\widehat{A_{0}}$ and $\Delta$, we may conclude
		\begin{align}\label{hdomainpropeq4_1}
			\left\|\left(H_{0}+z\right)^{\frac{n}{2}}\left(A\left(X\right)-\widehat{A_{0}}\right)\left(H_{0}+z\right)^{-\frac{n+1}{2}}\right\|_{B\left(L^2\left(\IR^{d},H^{r}\right)\right)}=o\left(1\right),\ z\to+\infty,\ 0\leq n\leq N-1,
		\end{align}
		We want to apply the well-known Kato-Rellich theorems (Theorem 4.1.1, Theorem 5.4.3, \cite{Kato}) to the operator $T_{1}:=\prod_{k=1}^{N}D_{\eta_{k}}$ as a perturbation of $T_{0}=\prod_{k=1}^{N}D_{\eta_{k},0}$. If we show that $T_{1}-T_{0}$ is relatively bounded by $T_{0}$ with a bound strictly less than $1$, then the domains of $T_{0}$ and $T_{1}$ must coincide, $T_{1}$ is closed and additionally self-adjoint if it is symmetric. The spectral resolution of $\widehat{A_{0}}$ and the Fourier transform commute, so we have that $\dom\left(T_{0}\right)=\dom\left(H_{0}^{\frac{N}{2}}\right)$, which implies that it suffices to show the relative bound for $H_{0}^{\frac{N}{2}}$ instead of $T_{0}$. Thus we need to show that
		\begin{align}\label{hdomainpropeq5}
			\sup_{\phi\in C_{c}^{\infty}\left(\IR^{d}\right)\otimes\mathcal{D},\phi\neq 0}\frac{\left\|\left(T_{1}-T_{0}\right)\phi\right\|_{L^2\left(\IR^{d},H^{r}\right)}}{\left\|\left(H_{0}+z\right)^{\frac{N}{2}}\phi\right\|_{L^2\left(\IR^{d},H^{r}\right)}}=o\left(1\right),\ z\to+\infty.
		\end{align}
		We note that for $\phi\in C_{c}^{\infty}\left(\IR^{d}\right)\otimes\mathcal{D}$,
		\begin{align}
			\left(T_{1}-T_{0}\right)\phi=\sum_{\alpha\in\left\{0,1\right\}^{N},\left|\alpha\right|\geq 1}\prod_{k=1}^{N}S_{k,\alpha_{k}}\phi,
		\end{align}
		where $S_{k,0}:=D_{\eta_{k},0}$, and $S_{k,1}=A\left(X\right)-\widehat{A_{0}}$. We treat each summand separately.
		\begin{align}
			\left\|\prod_{k=1}^{N}S_{k,\alpha_{k}}\phi\right\|_{L^2\left(\IR^{d},H^{r}\right)}\leq&\prod_{k=1}^{N}\left\|\left(H_{0}+z\right)^{\frac{k-1}{2}}S_{k,\alpha_{k}}\left(H_{0}+z\right)^{-\frac{k}{2}}\right\|_{B\left(L^2\left(\IR^{d},H^{r}\right)\right)}\nonumber\\
			&\left\|\left(H_{0}+z\right)^{\frac{N}{2}}\phi\right\|_{L^2\left(\IR^{d},H^{r}\right)}.
		\end{align}
		Because $\left\|\left(H_{0}+z\right)^{\frac{k-1}{2}}D_{\eta_{k},0}\left(H_{0}+z\right)^{-\frac{k}{2}}\right\|_{B\left(L^2\left(\IR^{d},H^{r}\right)\right)}\leq 1$ for $z\geq 1$, and at least one of the factors $S_{k,\alpha_{k}}$ equals $A\left(X\right)-\widehat{A_{0}}$, we conclude by (\ref{hdomainpropeq4_1}),
		\begin{align}
			\left\|\prod_{k=1}^{N}S_{k,\alpha_{k}}\phi\right\|_{L^2\left(\IR^{d},H^{r}\right)}=\left\|\left(H_{0}+z\right)^{\frac{N}{2}}\phi\right\|_{L^2\left(\IR^{d},H^{r}\right)}o\left(1\right),\ z\to+\infty,
		\end{align}
		which implies the required asymptotic (\ref{hdomainpropeq5}).
	\end{proof}
	
	\begin{Remark}\label{adomsamerem}
		We note that condition (\ref{hdomainpropeq0}) from Proposition \ref{hdomainprop} implies for a.e. $x\in\IR^{d}$,
		\begin{align}
			\left\|\left(A\left(x\right)-A_{0}\right)\left(A_{0}^{2}+z\right)^{-\frac{1}{2}}\right\|_{B\left(H^{r}\right)}=o\left(1\right),\ z\to+\infty,
		\end{align}
		which implies by the self-adjoint Kato Rellich theorem (Theorem 5.4.3, \cite{Kato}) that $\dom\left(A\left(x\right)\right)=\dom\left(A_{0}\right)$, for a.e. $x\in\IR^{d}$.
	\end{Remark}
	
	\section{Main Results}
	
	In this chapter we will discuss trace class properties of operators arising from multiplication operators, which then leads to the principal result of this work, Theorem \ref{mainthm}. We begin therefore with a Lemma giving conditions on a operator family $B$, such that a class of operators associated with the multiplication operator $B\left(X\right)$ and the Dirac operator $c\nabla$ are Hilbert-Schmidt operators in $L^2\left(\IR^{d},H^{r}\right)$.
	
	\begin{Lemma}\label{traceliftlem}
		Let $\left(B\left(x\right)\right)_{x\in\IR^{d}}$ be a measurable family of operators with $\mathcal{D}\subseteq\dom\left(B\left(x\right)\right)$, $x\in\IR^{d}$, such that
		\begin{align}
			\int_{\IR^{d}}\left\|B\left(x\right)\right\|_{S^1\left(H^{r}\right)}\Id x<\infty.
		\end{align}
		Let $s\geq 0$, $u\geq\frac{s}{2}$, $t<2u-s-\frac{d}{2}$, and $f:\IR^{d}\rightarrow\IC$ measurable with $\esssup_{\xi\in \IR^{d}}\left|f\left(\xi\right)\right|\langle\xi\rangle^{-t}<\infty$,
		then the operator,
		\begin{align}
			Q:=\langle\widehat{A_{0}}\rangle^{s}f\left(c\nabla\right)\left(H_{0}+1\right)^{-u}\left|B\left(X\right)\right|^{\frac{1}{2}},
		\end{align}
		is densely defined and admits a Hilbert-Schmidt extension in $L^2\left(\IR^{d},H^{r}\right)$.
	\end{Lemma}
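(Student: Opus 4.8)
The plan is to reduce the claim to a Hilbert-Schmidt estimate for an integral operator on $L^2(\IR^d,H^r)$ with an operator-valued kernel, and to control that kernel using the trace-norm summability of $B$ together with a sup-bound on the multiplier part. First I would observe that by the functional calculus and the diagonalisation of $\Delta$ (hence of $c\nabla$) by the Fourier transform, the operator
\[
M:=\langle\widehat{A_0}\rangle^{s}f(c\nabla)(H_0+1)^{-u}
\]
is bounded from $L^2(\IR^d,H^r)$ into $L^2(\IR^d,\dom(\langle A_0\rangle^{s}))$; more precisely, writing $H_0=\Delta+\widehat{A_0}^2$ and using $\langle\widehat A_0\rangle^{2}\le H_0+1$, one has
\[
\bigl\|\langle\widehat{A_0}\rangle^{s}(H_0+1)^{-u}\bigr\|\le\bigl\|(H_0+1)^{\frac{s}{2}-u}\bigr\|\le 1
\]
since $u\ge s/2$, and the remaining factor $f(c\nabla)(H_0+1)^{-(u-s/2)}$ acts, after Fourier transform, as multiplication by $f(\xi)\langle\xi,\widehat A_0\rangle^{\cdots}$; using $|f(\xi)|\le C\langle\xi\rangle^{t}$ and $t<2u-s-\tfrac d2$ one gets a factor decaying like $\langle\xi\rangle^{t-2(u-s/2)}$ whose square is integrable in $\xi$ over $\IR^d$ exactly because $2(2u-s)-2t>d$. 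The point of keeping $L^p$-type bookkeeping is that this gives $M$ as an operator with an $H^r$-operator-valued convolution kernel $k_M(x-y)$ satisfying $\int_{\IR^d}\|k_M(z)\|_{B(H^r)}^2\,\Id z<\infty$.

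Next I would write $Q=M\,|B(X)|^{1/2}$ as an integral operator: for $f\in C_c(\IR^d)\otimes\mathcal{D}$,
\[
(Qf)(x)=\int_{\IR^d}k_M(x-y)\,|B(y)|^{1/2}f(y)\,\Id y,
\]
so $Q$ has operator-valued kernel $K(x,y)=k_M(x-y)\,|B(y)|^{1/2}$. The Hilbert-Schmidt norm of an operator with such a kernel is
\[
\|Q\|_{S^2}^2=\int_{\IR^d}\int_{\IR^d}\bigl\|k_M(x-y)\,|B(y)|^{1/2}\bigr\|_{S^2(H^r)}^2\,\Id x\,\Id y
\le\int_{\IR^d}\int_{\IR^d}\|k_M(x-y)\|_{B(H^r)}^2\,\bigl\|\,|B(y)|^{1/2}\bigr\|_{S^2(H^r)}^2\,\Id x\,\Id y,
\]
using $\|ST\|_{S^2}\le\|S\|_{B}\|T\|_{S^2}$. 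Now $\bigl\|\,|B(y)|^{1/2}\bigr\|_{S^2(H^r)}^2=\tr_{H^r}|B(y)|=\|B(y)\|_{S^1(H^r)}$, and the $x$-integral factors out as $\int_{\IR^d}\|k_M(z)\|_{B(H^r)}^2\,\Id z$, which is finite by the previous paragraph. Hence
\[
\|Q\|_{S^2}^2\le\Bigl(\int_{\IR^d}\|k_M(z)\|_{B(H^r)}^2\,\Id z\Bigr)\Bigl(\int_{\IR^d}\|B(y)\|_{S^1(H^r)}\,\Id y\Bigr)<\infty,
\]
so $Q$ extends to a Hilbert-Schmidt operator. Dense definedness on $C_c(\IR^d)\otimes\mathcal{D}$ follows from Lemma \ref{densemultiplierdomlem} (which gives $C_c(\IR^d)\otimes\mathcal{D}\subseteq\dom(|B(X)|^{1/2})$ once one notes $B\in L^{2,End}_{loc}$ implies the same for $|B|^{1/2}$, since $\||B(x)|^{1/2}\psi\|^2\le\|B(x)\psi\|\,\|\psi\|$ pointwise) together with the mapping properties of $M$.

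The main obstacle is making the kernel representation of $M$ rigorous in the operator-valued (infinite-dimensional $H$) setting and verifying that $\int\|k_M(z)\|_{B(H^r)}^2\Id z<\infty$ rather than merely the weaker $L^2$-operator-norm bound one gets for free; the subtlety is that $k_M(z)$ is the inverse Fourier transform of $\xi\mapsto f(\xi)\langle\xi\rangle^{s}\text{-type}$ symbol times $(\,|\xi|^2+\widehat A_0^2+1)^{-u}$, which is a genuinely $H^r$-operator-valued symbol, so one must check that its $B(H^r)$-norm is dominated by a scalar symbol $\langle\xi\rangle^{t+s-2u}$ uniformly in the $A_0$-spectral variable (this uses $\langle\xi,\lambda\rangle^{s}(\,|\xi|^2+\lambda^2+1)^{-u}\le\langle\xi\rangle^{s-2u}$ for $u\ge s/2$, valid by splitting into $\lambda^2\lessgtr|\xi|^2$), whose square is then integrable precisely under the hypothesis $t<2u-s-\tfrac d2$. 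Once this scalar domination is in hand, everything reduces to the scalar Hilbert-Schmidt/Young-type estimate above, and the interplay with the $S^1(H^r)$-summability of $B$ is exactly the standard "trace lift" mechanism.
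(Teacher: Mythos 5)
Your reduction to an operator-valued kernel estimate is the right general shape, and your treatment of dense definedness (via $\bigl\||B(x)|^{1/2}\psi\bigr\|^{2}\leq\|\psi\|^{2}\|B(x)\|_{S^1\left(H^{r}\right)}$ and Lemma \ref{densemultiplierdomlem}) matches the paper. But the step you yourself flag as ``the main obstacle'' is a genuine gap, and the resolution you sketch does not close it. You need $\int_{\IR^{d}}\|k_{M}(z)\|_{B\left(H^{r}\right)}^{2}\,\Id z<\infty$, and you propose to get it from the scalar domination of the \emph{symbol}, $\|f(\xi)\langle A_{0}\rangle^{s}(1+|\xi|^{2}+A_{0}^{2})^{-u}\|_{B\left(H^{r}\right)}\leq C\langle\xi\rangle^{t+s-2u}\in L^{2}(\Id\xi)$. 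That gives an $L^{2}$ bound on the symbol in operator norm, but Plancherel does not transfer it to the kernel: $B\left(H^{r}\right)$ with the operator norm is not a Hilbert space (it fails to have Fourier type $2$ for infinite-dimensional $H$), so $\widehat{k_{M}}\in L^{2}(\IR^{d},B\left(H^{r}\right))$ does not imply $k_{M}\in L^{2}(\IR^{d},B\left(H^{r}\right))$. Concretely, writing $k_{M}(z)=\int_{\IR}g(z,\lambda)\,\Id E_{A_{0}}(\lambda)$ with $g(\cdot,\lambda)$ the scalar Fourier transform of $\xi\mapsto f(\xi)\langle\lambda\rangle^{s}(1+|\xi|^{2}+\lambda^{2})^{-u}$, your quantity is $\int\esssup_{\lambda}|g(z,\lambda)|^{2}\Id z$, whereas scalar Plancherel only controls $\sup_{\lambda}\int|g(z,\lambda)|^{2}\Id z$; the sup and the integral cannot be interchanged, and for a general measurable $f$ with $|f(\xi)|\leq C\langle\xi\rangle^{t}$ there is no pointwise-in-$z$ dominant for $g(z,\lambda)$.

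The paper's proof avoids exactly this by \emph{not} using the submultiplicative bound $\|k_{M}(x-y)|B(y)|^{1/2}\|_{S^2\left(H^{r}\right)}\leq\|k_{M}(x-y)\|_{B\left(H^{r}\right)}\||B(y)|^{1/2}\|_{S^2\left(H^{r}\right)}$, which is what forces the $\lambda$-sup. Instead it keeps the spectral parameter: introducing the finite measure $\mu_{B}(I)=\int_{\IR^{d}}\|\one_{I}(A_{0})|B(y)|^{1/2}\|_{S^2\left(H^{r}\right)}^{2}\Id y$, one has the exact identity $\int_{\IR^{d}}\|\int_{\IR}g(z,\lambda)\Id E_{A_{0}}(\lambda)|B(y)|^{1/2}\|_{S^2\left(H^{r}\right)}^{2}\Id y=\int_{\IR}|g(z,\lambda)|^{2}\Id\mu_{B}(\lambda)$, after which Fubini and \emph{scalar} Plancherel in $z$ for each fixed $\lambda$ yield the bound $C\int_{\IR}\int_{\IR^{d}}\langle\xi\rangle^{-4u+2s+2t}\Id\xi\,\Id\mu_{B}(\lambda)<\infty$ under $t<2u-s-\frac{d}{2}$. (The paper also regularizes with $e^{-\epsilon\Delta}\one_{[-n,n]}(\widehat{A_{0}})$ to make the kernel manipulations legitimate; that part is routine and you could add it to your argument.) To repair your proof you should replace the $B\left(H^{r}\right)$-norm bookkeeping for $k_{M}$ by this spectral-measure computation; as written, the claimed finiteness of $\int\|k_{M}(z)\|_{B\left(H^{r}\right)}^{2}\Id z$ is unjustified.
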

	
	\begin{proof}
		Let $K\subset\IR^{d}$ be compact and $\psi\in\mathcal{D}$. Then
		\begin{align}
			\int_{K}\left\|\left|B\left(x\right)\right|^{\frac{1}{2}}\psi\right\|_{H^{r}}^{2}\Id x\leq\left\|\psi\right\|_{H^{r}}^{2}\int_{\IR^{d}}\left\|\left|B\left(x\right)\right|^{\frac{1}{2}}\right\|_{S^2\left(H^{r}\right)}^{2}\Id x=\left\|\psi\right\|_{\H^{r}}^{2}\int_{\IR^{d}}\left\|B\left(x\right)\right\|_{S^1\left(H^{r}\right)}\Id x<\infty.
		\end{align}
		So $\left|B\right|^{\frac{1}{2}}\in L^{2,End}_{loc}\left(\IR^{d},\left(\mathcal{D},H^{r}\right)\right)$. Lemma \ref{densemultiplierdomlem} then implies $C_{c}\left(\IR^{d}\right)\otimes\mathcal{D}\subseteq\dom\left(\left|B\left(X\right)\right|^{\frac{1}{2}}\right)$, thus $Q$ is densely defined.\\
		For $\epsilon\searrow0$ and $n\to\infty$ the operators
		\begin{align}
			\kappa_{\epsilon,n}:=e^{-\epsilon\Delta}\one_{\left[-n,n\right]}\left(\widehat{A_{0}}\right),
		\end{align}
		jointly converge strongly to $1$ in $L^2\left(\IR^{d},H^{r}\right)$. Moreover for $\epsilon>0$ and $n\in\IN$
		\begin{align}
			\kappa_{\epsilon,n}\left(x,y\right):=\left(2\pi\right)^{-d}\int_{\IR^{d}}e^{-i\langle x-y,\xi\rangle}e^{-\epsilon\left|\xi\right|^{2}}\Id\xi\ \one_{\left[-n,n\right]}\left(A_{0}\right)
		\end{align}
		is a smooth $B\left(H^{r}\right)$-valued kernel of $\kappa_{\epsilon,n}$. Let $Q_{\epsilon,n}:=\kappa_{\epsilon,n}Q$. For $\epsilon>0$ and $n\in\IN$ its $B\left(H^{r}\right)$-valued integral kernel $k_{Q_{\epsilon,n}}$ at $x$, $y\in\IR^{d}$ is given by
		\begin{align}
			&k_{Q_{\epsilon,n}}\left(x,y\right)\nonumber\\
			=&\left(2\pi\right)^{-d}\int_{\IR^{d}}e^{-i\langle x-y,\xi\rangle}e^{-\epsilon\left|\xi\right|^{2}}f\left(\xi\right)\left(1+\left|\xi\right|^{2}+A_{0}^{2}\right)^{-u}\Id\xi\ \langle A_{0}\rangle^{s}\one_{\left[-n,n\right]}\left(A_{0}\right)\left|B\left(y\right)\right|^{\frac{1}{2}}\nonumber\\
			=&\left(2\pi\right)^{-d}\int_{\IR}\int_{\IR^{d}}e^{-i\langle x-y,\xi\rangle}e^{-\epsilon\left|\xi\right|^{2}}f\left(\xi\right)\left(1+\left|\xi\right|^{2}+\lambda^{2}\right)^{-u}\Id\xi\ \langle \lambda\rangle^{s}\one_{\left[-n,n\right]}\left(\lambda\right)\Id E_{A_{0}}\left(\lambda\right)\ \left|B\left(y\right)\right|^{\frac{1}{2}}
		\end{align}
		Let us introduce the finite Borel measure $\mu_{B}$ on $\IR$ given by
		\begin{align}
			\mu_{B}\left(I\right):=\int_{\IR^{d}}\left\|\one_{I}\left(A_{0}\right)\left|B\left(y\right)\right|^{\frac{1}{2}}\right\|_{S^2\left(H^{r}\right)}^{2}\Id y,\ I\in\mathcal{B}\left(\IR\right),
		\end{align}
		with
		\begin{align}
			\mu_{B}\left(\IR\right)=\int_{\IR^{d}}\left\|\left|B\left(y\right)\right|^{\frac{1}{2}}\right\|_{S^2\left(H^{r}\right)}^{2}\Id y=\int_{\IR^{d}}\left\|B\left(y\right)\right\|_{S^1\left(H^{r}\right)}\Id y<\infty.
		\end{align}
		If $g:\IR\rightarrow\IC$ is an essentially bounded Borel function, then for any orthonormal basis\footnote{$\mathcal{D}$ is a dense linear subspace of a separable Hilbert space, thus such an orthonormal basis always exists.} $\left(\phi_{n}\right)_{n\in\IN}\subseteq\mathcal{D}$ of $H^{r}$,
		\begin{align}\label{traceliftlemeq1}
			&\int_{\IR^{d}}\left\|\int_{\IR}g\left(\lambda\right)\Id E_{A_{0}}\left(\lambda\right)\left|B\left(y\right)\right|^{\frac{1}{2}}\right\|_{S^2\left(H^{r}\right)}^{2}\Id y\nonumber\\
			=&\int_{\IR^{d}}\sum_{n\in\IN}\int_{\IR}\left|g\left(\lambda\right)\right|^{2}\Id\langle E_{A_{0}}\left(\lambda\right)\left|B\left(y\right)\right|^{\frac{1}{2}}\phi_{n},\left|B\left(y\right)\right|^{\frac{1}{2}}\phi_{n}\rangle_{H^{r}}\Id y\nonumber\\
			=&\int_{\IR}\left|g\left(\lambda\right)\right|^{2}\Id\left(\int_{\IR^{d}}\sum_{n\in\IN}\langle E_{A_{0}}\left(\lambda\right)\left|B\left(y\right)\right|^{\frac{1}{2}}\phi_{n},\left|B\left(y\right)\right|^{\frac{1}{2}}\phi_{n}\rangle_{H^{r}}\Id y\right)\nonumber\\
			=&\int_{\IR}\left|g\left(\lambda\right)\right|^{2}\Id\mu_{B}\left(\lambda\right).
		\end{align}
		We return to the integral kernel of $Q_{\epsilon,n}$. For $\epsilon,\delta>0$, $n,m\in\IN$, the function
		\begin{align}
			f_{n,m,\epsilon,\delta}&:\IR^{d}\times\IR\rightarrow\IC\nonumber\\
			f_{n,m,\epsilon,\delta}\left(z,\lambda\right)&:=\left(2\pi\right)^{-d}\langle\lambda\rangle^{s}\int_{\IR^{d}}e^{-i\langle z,\xi\rangle}\left(e^{-\epsilon\left|\xi\right|^{2}}\one_{\left[-n,n\right]}\left(\lambda\right)-e^{-\delta\left|\eta\right|^{2}}\one_{\left[-m,m\right]}\left(\lambda\right)\right)\nonumber\\
			&\quad\quad\quad\quad\quad\quad\quad\quad f\left(\xi\right)\left(1+\left|\xi\right|^{2}+\lambda^{2}\right)^{-u}\Id\xi,
		\end{align}
		is a bounded Borel function in $\lambda$ for every $z\in\IR^{d}$. Additionally $f_{n,m,\epsilon,\delta}$ is given as a partial Fourier transform in $\xi$, thus we may utilize Plancherel's theorem for the $L^2$-norm in $z$. Let us consider the operator $Q_{\epsilon,n}-Q_{\delta,m}$
		\begin{align}\label{traceliftlemeq2}
			&\left\|Q_{\epsilon,n}-Q_{\delta,m}\right\|_{S^2\left(L^2\left(\IR^{d},H^{r}\right)\right)}^{2}=\int_{\IR^{d}}\int_{\IR{d}}\left\|k_{Q_{\epsilon,n}}\left(x,y\right)-k_{Q_{\delta,m}}\left(x,y\right)\right\|_{S^2\left(H^{r}\right)}^{2}\Id x\ \Id y\nonumber\\
			=&\int_{\IR^{d}}\int_{\IR^{d}}\left\|\int_{\IR}f_{n,m,\epsilon,\delta}\left(\left(x-y\right),\lambda\right)\Id E_{A_{0}}\left(\lambda\right)\left|B\left(y\right)\right|^{\frac{1}{2}}\right\|_{S^2\left(H^{r}\right)}^{2}\Id x\ \Id y\nonumber\\
			=&\int_{\IR^{d}}\int_{\IR^{d}}\left\|\int_{\IR}f_{n,m,\epsilon,\delta}\left(z,\lambda\right)\Id E_{A_{0}}\left(\lambda\right)\left|B\left(y\right)^{\ast}\right|^{\frac{1}{2}}\right\|_{S^2\left(H^{r}\right)}^{2}\Id y\ \Id z\nonumber\\
			\stackrel{(\ref{traceliftlemeq1})}{=}&\int_{\IR^{d}}\int_{\IR}\left|f_{n,m,\epsilon,\delta}\left(z,\lambda\right)\right|^{2}\Id\mu_{B}\left(\lambda\right)\ \Id z,
		\end{align}
		We continue by applying Plancherel's theorem,
	\begin{align}
			=&\left(2\pi\right)^{-d}\int_{\IR}\langle\lambda\rangle^{2s}\int_{\IR^{d}}\left|e^{-\epsilon\left|\xi\right|^{2}}\one_{\left[-n,n\right]}\left(\lambda\right)-e^{-\delta\left|\xi\right|^{2}}\one_{\left[-m,m\right]}\left(\lambda\right)\right|^{2}\nonumber\\
			&\quad\quad\quad\quad\quad\quad\quad\quad\left|f\left(\xi\right)\right|^{2}\left(1+\left|\xi\right|^{2}+\lambda^{2}\right)^{-2u}\Id\xi\ \Id\mu_{B}\left(\lambda\right)\nonumber\\
			\leq&\left(2\pi\right)^{-d}\esssup_{\eta\in \IR^{d}}\left|f\left(\eta\right)\right|^{2}\langle\eta\rangle^{-2t}\nonumber\\
			&\int_{\IR}\int_{\IR^{d}}\left|e^{-\epsilon\left|\xi\right|^{2}}\one_{\left[-n,n\right]}\left(\lambda\right)-e^{-\delta\left|\xi\right|^{2}}\one_{\left[-m,m\right]}\left(\lambda\right)\right|^{2}\langle\xi\rangle^{-4u+2s+2t}\Id\xi\ \Id\mu_{B}\left(\lambda\right).
		\end{align}
		The integrand in the last line of (\ref{traceliftlemeq2}) possesses the dominant $\left(\xi,\lambda\right)\mapsto\langle\xi\rangle^{-4u+2s+2t}$, which is integrable against $\Id\xi\otimes\Id\mu_{B}$, because $-4u+2s+2t<-d$ and $\mu_{B}$ is finite. Therefore we may interchange limits in $\epsilon,\delta\searrow 0$ and $n,m\to\infty$ with both integrals. Thus
		\begin{align}
			\left\|Q_{\epsilon,n}-Q_{\delta,m}\right\|_{S^2\left(L^2\left(\IR^{d},H^{r}\right)\right)}^{2}\xrightarrow{\epsilon,\delta\searrow 0,\ n,m\to\infty}0.
		\end{align}
		Since for $\epsilon>0$ and $n\in\IN$ we may similarly show that
		\begin{align}
			\left\|Q_{\epsilon,n}\right\|_{S^2\left(L^2\left(\IR^{d},H^{r}\right)\right)}^{2}\leq&\left(2\pi\right)^{-d}\esssup_{\eta\in \IR^{d}}\left|f\left(\eta\right)\right|^{2}\langle\eta\rangle^{-2t}\nonumber\\
			&\int_{\IR}\int_{\IR^{d}}\left|e^{-\epsilon\left|\xi\right|^{2}}\one_{\left[-n,n\right]}\left(\lambda\right)\right|^{2}\langle\xi\rangle^{-4u+2s+2t}\Id\xi\ \Id\mu_{B}\left(\lambda\right)\nonumber\\
			\leq&\left(2\pi\right)^{-d}\esssup_{\eta\in \IR^{d}}\left|f\left(\eta\right)\right|^{2}\langle\eta\rangle^{-2t}\int_{\IR}\int_{\IR^{d}}\langle\xi\rangle^{-4u+2s+2t}\Id\xi\ \Id\mu_{B}\left(\lambda\right)<\infty,
		\end{align}
		we conclude that the operators $Q_{\epsilon,n}$ admit Hilbert-Schmidt extensions which form a Cauchy sequence of Hilbert-Schmidt operators, and thus converge to some $\widetilde{Q}\in S^2\left(L^2\left(\IR^{d},H^{r}\right)\right)$. On the other hand the strong limit of $Q_{\epsilon,n}$ on their common domain containing $C_{c}^{\infty}\left(\IR^{d}\right)\otimes\mathcal{D}$ is $Q$. Thus $Q$ admits the extension $\widetilde{Q}$.
	\end{proof}

	An immediate consequence of the previous Lemma is that we can give conditions on a operator family $B$, such that a class of operators associated with the multiplication operator $B\left(X\right)$ and the Dirac operator $c\nabla$ become are trace-class operators in $L^2\left(\IR^{d},H^{r}\right)$, by splitting into a product of Hilbert-Schmidt operators.
	
	\begin{Corollary}\label{traceliftcor}
		Let $B\in L^{2,End}_{loc}\left(\IR^{d},\left(\mathcal{D},H^{r}\right)\right)$, such that
		\begin{align}
			\int_{\IR^{d}}\left\|B\left(x\right)\right\|_{S^1\left(H^{r}\right)}\Id x<\infty.
		\end{align}
		Let $s_{j}\geq 0$, $u_{j}\geq\frac{s_{j}}{2}$, and $t_{j}<2u_{j}-s_{j}-\frac{d}{2}$ for $j\in\left\{1,2\right\}$, then the operator,
		\begin{align}
			S:=\langle\widehat{A_{0}}\rangle^{s_{1}}\langle c\nabla\rangle^{t_{1}}\left(H_{0}+1\right)^{-u_{1}}B\left(X\right)\langle\widehat{A_{0}}\rangle^{s_{2}}\p^{\delta}\left(H_{0}+1\right)^{-u_{2}},\ \delta\in\IN^{d},\ \left|\delta\right|\leq t_{2},
		\end{align}
		is densely defined on $\rg\left(\left.\left(H_{0}+1\right)^{u_{2}}\right|_{C_{c}^{\infty}\left(\IR^{d}\right)\otimes\mathcal{D}}\right)$ and admits a trace-class extension\break in $L^2\left(\IR^{d},H^{r}\right)$.
	\end{Corollary}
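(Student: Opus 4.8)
The plan is to write $S$, on a suitable dense domain, as a product $Q_{1}\,U(X)\,Q_{2}$ of a Hilbert--Schmidt operator, a contraction, and another Hilbert--Schmidt operator, with the two Hilbert--Schmidt factors produced by Lemma \ref{traceliftlem}.

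First I would use that $\int_{\IR^{d}}\|B(x)\|_{S^1(H^{r})}\,\Id x<\infty$ forces $\|B(x)\|_{S^1(H^{r})}<\infty$ for a.e.\ $x$, so that the closure of $B(x)$ is trace class, in particular bounded, with polar decomposition $B(x)=U(x)|B(x)|$; the family $x\mapsto U(x)$ is strongly measurable and $\|U(x)\|_{B(H^{r})}\le 1$. Setting $|B(x)^{\ast}|:=(B(x)B(x)^{\ast})^{1/2}$ one has $|B(x)^{\ast}|^{1/2}=U(x)|B(x)|^{1/2}U(x)^{\ast}$ and $U(x)^{\ast}U(x)|B(x)|^{1/2}=|B(x)|^{1/2}$, hence the fibrewise identity
\begin{align}\label{polarsplit}
	B(x)=|B(x)^{\ast}|^{\frac{1}{2}}\,U(x)\,|B(x)|^{\frac{1}{2}},
\end{align}
which lifts to $B(X)=|B(X)^{\ast}|^{\frac{1}{2}}\,U(X)\,|B(X)|^{\frac{1}{2}}$ as multiplication operators, valid on $C_{c}^{\infty}(\IR^{d})\otimes\mathcal{D}$ where each factor is densely defined by Lemma \ref{densemultiplierdomlem} (using $\|B(x)\|_{S^1(H^{r})}<\infty$ a.e.), and $U(X)$ is a contraction of $L^2(\IR^{d},H^{r})$. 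Writing $T_{1}:=\langle\widehat{A_{0}}\rangle^{s_{1}}\langle c\nabla\rangle^{t_{1}}(H_{0}+1)^{-u_{1}}$ and $T_{2}:=\langle\widehat{A_{0}}\rangle^{s_{2}}\p^{\delta}(H_{0}+1)^{-u_{2}}$, this gives, on the set $\rg\big((H_{0}+1)^{u_{2}}|_{C_{c}^{\infty}(\IR^{d})\otimes\mathcal{D}}\big)$ (dense by Lemma \ref{densesetlem}),
\begin{align}\label{Ssplit}
	S=\big(T_{1}\,|B(X)^{\ast}|^{\frac{1}{2}}\big)\,U(X)\,\big(|B(X)|^{\frac{1}{2}}\,T_{2}\big)=:Q_{1}\,U(X)\,Q_{2}.
\end{align}

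Next I would show that $Q_{1}$ and $Q_{2}$ admit Hilbert--Schmidt extensions. For $Q_{1}$, apply Lemma \ref{traceliftlem} to the measurable family $(B(x)^{\ast})_{x\in\IR^{d}}$, which satisfies $\int_{\IR^{d}}\|B(x)^{\ast}\|_{S^1(H^{r})}\,\Id x<\infty$ and for which the multiplication operator $|B(X)^{\ast}|^{1/2}$ is the square root appearing in that Lemma, with $s:=s_{1}$, $u:=u_{1}$, $t:=t_{1}$ and the Fourier multiplier $f$ with symbol $\langle\xi\rangle^{t_{1}}$; here $\langle c\nabla\rangle^{2}=1+(c\nabla)^{2}=1+\Delta$ acts as multiplication by $1+|\xi|^{2}$, so $\langle c\nabla\rangle^{t_{1}}=f(c\nabla)$ and $\esssup_{\xi}|f(\xi)|\langle\xi\rangle^{-t_{1}}=1$, while $u_{1}\ge s_{1}/2$ and $t_{1}<2u_{1}-s_{1}-d/2$ are precisely the remaining hypotheses. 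For $Q_{2}$ I would pass to the adjoint: since $\langle\widehat{A_{0}}\rangle^{s_{2}}$, $\p^{\delta}$ and $(H_{0}+1)^{-u_{2}}$ commute pairwise (being functions of the commuting operators $\widehat{A_{0}}$ and $-i\nabla$), the formal adjoint of $Q_{2}$ on $C_{c}^{\infty}(\IR^{d})\otimes\mathcal{D}$ equals $\pm\langle\widehat{A_{0}}\rangle^{s_{2}}\p^{\delta}(H_{0}+1)^{-u_{2}}|B(X)|^{1/2}$, where $\p^{\delta}$ is the Fourier multiplier with symbol $(i\xi)^{\delta}$ of modulus $\le\langle\xi\rangle^{|\delta|}$; this has the form treated by Lemma \ref{traceliftlem} with $s:=s_{2}$, $u:=u_{2}$, $t:=|\delta|$, the constraints $u_{2}\ge s_{2}/2$ and $|\delta|\le t_{2}<2u_{2}-s_{2}-d/2$ holding by assumption. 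Hence this adjoint has a Hilbert--Schmidt extension, and dualising once more yields a Hilbert--Schmidt extension $\widetilde{Q_{2}}$ of $Q_{2}$; likewise Lemma \ref{traceliftlem} gives a Hilbert--Schmidt extension $\widetilde{Q_{1}}$ of $Q_{1}$.

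Finally, since the Hilbert--Schmidt operators form an ideal, $\widetilde{Q_{1}}\,U(X)\,\widetilde{Q_{2}}\in S^1(L^2(\IR^{d},H^{r}))$, and it remains to verify that this operator extends $S$: for $\phi\in C_{c}^{\infty}(\IR^{d})\otimes\mathcal{D}$ one has $S(H_{0}+1)^{u_{2}}\phi=T_{1}B(X)\psi$ with $\psi:=\langle\widehat{A_{0}}\rangle^{s_{2}}\p^{\delta}\phi\in C_{c}^{\infty}(\IR^{d})\otimes\mathcal{D}$, into which one inserts (\ref{polarsplit}) and matches the three resulting factors with $\widetilde{Q_{1}}$, $U(X)$, $\widetilde{Q_{2}}$ using $\widetilde{Q_{i}}\supseteq Q_{i}$, so that $S$ coincides with $\widetilde{Q_{1}}\,U(X)\,\widetilde{Q_{2}}$ on the dense set $\rg\big((H_{0}+1)^{u_{2}}|_{C_{c}^{\infty}(\IR^{d})\otimes\mathcal{D}}\big)$ (the parameter constraints also ensuring $S$ is genuinely defined there). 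I expect the real work to lie not in the Hilbert--Schmidt bounds, which are immediate from Lemma \ref{traceliftlem}, but in the bookkeeping forced by the non-self-adjointness of $B$: establishing (\ref{polarsplit}) together with the measurability of $x\mapsto U(x)$, and, in the last identification, locating the intermediate vector $U(X)|B(X)|^{1/2}\psi$ in the domain of the pre-extension $Q_{1}$.
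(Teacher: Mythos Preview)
Your proposal is correct and follows essentially the same route as the paper: the polar-decomposition factorisation $B(x)=|B(x)^{\ast}|^{1/2}U(x)|B(x)|^{1/2}$, then Lemma \ref{traceliftlem} applied once to $|B(\cdot)^{\ast}|^{1/2}$ for the left factor and once (after passing to the adjoint) to $|B(\cdot)|^{1/2}$ for the right factor. The only cosmetic difference is that the paper absorbs $U(X)$ into its $Q_{2}$ rather than keeping a three-term product; your more careful treatment of $U(x)$ as a partial isometry with $\|U(x)\|\le 1$ (rather than as a unitary, which it need not be on an infinite-dimensional $H^{r}$) is in fact an improvement.
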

	
	\begin{proof}
		The space $\rg\left(\left.\left(H_{0}+1\right)^{u_{2}}\right|_{C_{c}^{\infty}\left(\IR^{d}\right)\otimes\mathcal{D}}\right)$ is dense in $L^2\left(\IR^{d},H^{r}\right)$ by Lemma \ref{densesetlem}. Let $f\in\rg\left(\left.\left(H_{0}+1\right)^{u_{2}}\right|_{C_{c}^{\infty}\left(\IR^{d}\right)\otimes\mathcal{D}}\right)$. Then there is $g\in C_{c}^{\infty}\left(\IR^{d}\right)\otimes\mathcal{D}$, such that $\left(H_{0}+1\right)^{-u_{2}}f=g$. Since $\langle\widehat{A_{0}}\rangle^{s_{2}}$ and $\p^{\delta}$ map $C_{c}^{\infty}\left(\IR^{d}\right)\otimes\mathcal{D}$ to itself, we obtain that $\langle\widehat{A_{0}}\rangle^{s_{2}}\p^{\delta}\left(H_{0}+1\right)^{-u_{2}}f\in C_{c}^{\infty}\left(\IR^{d}\right)\otimes\mathcal{D}$. Since $B\in L^{2,End}_{loc}\left(\IR^{d},\left(\mathcal{D},H^{r}\right)\right)$, Lemma \ref{densemultiplierdomlem} implies that
		\begin{align}\langle\widehat{A_{0}}\rangle^{s_{2}}\p^{\delta}\left(H_{0}+1\right)^{-u_{2}}f\in\dom\left(B\left(X\right)\right).
		\end{align}
	Therefore $\rg\left(\left.\left(H_{0}+1\right)^{u_{2}}\right|_{C_{c}^{\infty}\left(\IR^{d}\right)\otimes\mathcal{D}}\right)\subseteq\dom\left(S\right)$, which shows that $S$ is densely defined.
		
		By polar decomposition there exists a measurable family $\left(U\left(x\right)\right)_{x\in\IR^{d}}$ of unitary operators in $H^{r}$, such that for a.e. $x\in\IR^{d}$,
		\begin{align}
			B\left(x\right)=\left|B\left(x\right)^{\ast}\right|^{\frac{1}{2}}U\left(x\right)\left|B\left(x\right)\right|^{\frac{1}{2}}.
		\end{align}
		Accordingly we may decompose
		\begin{align}
			S&=Q_{1}Q_{2},\nonumber\\
			Q_{1}&=\langle\widehat{A_{0}}\rangle^{s_{1}}\langle c\nabla\rangle^{t_{2}}\left(H_{0}+1\right)^{-u_{1}}\left|B\left(X\right)^{\ast}\right|^{\frac{1}{2}},\nonumber\\
			Q_{2}&=U\left(X\right)\left|B\left(X\right)\right|^{\frac{1}{2}}\langle\widehat{A_{0}}\rangle^{s_{2}}\p^{\delta}\left(H_{0}+1\right)^{-u_{2}}.
		\end{align}
		For $Q_{2}$ we obtain, using the commutativity of $\widehat{A_{0}}$, $\p$ and $H_{0}$ on adequate domains and the $\ast$-invariance of Hilbert-Schmidt operators,
		\begin{align}
			&\left\|Q_{2}\right\|_{S^2\left(L^2\left(\IR^{d},H^{r}\right)\right)}=\left\|\left|B\left(X\right)\right|^{\frac{1}{2}}\langle\widehat{A_{0}}\rangle^{s_{2}}\p^{\delta}\left(H_{0}+1\right)^{-u_{2}}\right\|_{S^2\left(L^2\left(\IR^{d},H^{r}\right)\right)}\nonumber\\
			=&\left\|\langle\widehat{A_{0}}\rangle^{s_{2}}\p^{\delta}\left(H_{0}+1\right)^{-u_{2}}\left|B\left(X\right)\right|^{\frac{1}{2}}\right\|_{S^2\left(L^2\left(\IR^{d},H^{r}\right)\right)},
		\end{align}
		which implies that $Q_{2}$ posesses a Hilbert-Schmidt extension by Lemma \ref{traceliftlem}. Moreover Lemma \ref{traceliftlem} also implies that $Q_{1}$ admits a Hilbert-Schmidt extension, because $\ast$-invariance of trace-class operators yields,
		\begin{align}
			\int_{\IR^{d}}\left\|B\left(x\right)^{\ast}\right\|_{S^1\left(H^{r}\right)}\Id x=\int_{\IR^{d}}\left\|B\left(x\right)\right\|_{S^1\left(H^{r}\right)}\Id x<\infty.
		\end{align}
		Consequently $S$ admits a trace-class extension.
	\end{proof}

	The next result is concerned with giving conditions on a family $B$, such that the operator $\left(H_{0}+1\right)^{-1}B\left(X\right)\left(H_{0}+1\right)^{-N}$ admits a trace-class extension in $L^2\left(\IR^{d},H^{r}\right)$. To that end, we will use Lemma \ref{leibnizsplitlem} to rewrite $B\left(X\right)$, and obtain a decomposition of
	\begin{align}
		\left(H_{0}+1\right)^{-1}B\left(X\right)\left(H_{0}+1\right)^{-N}
\end{align}
into operators of the type discussed in Corollary \ref{traceliftcor}.

	\begin{Proposition}\label{traceprop}
		Denote $n:=\max\left(\lfloor\frac{d}{2}-2\rfloor+1,0\right)$. Let $\alpha\in\IR^{\geq 0}$ be fixed. Let $B\in W^{n,2,End}_{loc}\left(\IR^{d},\left(\mathcal{D},H^{r}\right)\right)$, and
		\begin{align}
			\int_{\IR^{d}}\left\|\p^{\gamma,End}\left(B\langle A_{0}\rangle^{-\alpha}\right)\left(x\right)\right\|_{S^1\left(H^{r}\right)}\Id x<\infty,
		\end{align}
		for all $\gamma\in\IN^{d}$ with $\left|\gamma\right|\leq n$.\\
		Then for $N>\frac{\alpha}{2}+\frac{n}{2}+\frac{d}{4}$, the operator
		\begin{align}
			\left(H_{0}+1\right)^{-1}B\left(X\right)\left(H_{0}+1\right)^{-N}
		\end{align}
		is densely defined on $\rg\left(\left.\left(H_{0}+1\right)^{N}\right|_{C_{c}^{\infty}\left(\IR^{d}\right)\otimes\mathcal{D}}\right)$, and admits a trace-class extension in $L^2\left(\IR^{d},H^{r}\right)$.
	\end{Proposition}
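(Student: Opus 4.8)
The plan is to reduce the claim to Corollary~\ref{traceliftcor} by first peeling off a factor $\langle\widehat{A_{0}}\rangle^{-\alpha}$ and then applying the smoothing decomposition of Lemma~\ref{leibnizsplitlem} to the family $\widetilde{B}:=B\langle A_{0}\rangle^{-\alpha}$. Concretely, write $B\left(X\right)=\widetilde{B}\left(X\right)\langle\widehat{A_{0}}\rangle^{\alpha}$ on $C_{c}^{\infty}\left(\IR^{d}\right)\otimes\mathcal{D}$ (this is legitimate since $\langle\widehat{A_{0}}\rangle^{\alpha}$ preserves $C_{c}^{\infty}\left(\IR^{d}\right)\otimes\mathcal{D}$, and $\widetilde{B}$ inherits the local $L^{2,End}$ property because its weak derivatives up to order $n$ are $S^{1}$-integrable by hypothesis, hence in particular locally square-integrable). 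The hypothesis $\widetilde{B}\in W^{n,2,End}_{loc}$ with $S^{1}$-integrable derivatives of order $\le n$ is exactly what feeds into both Lemma~\ref{leibnizsplitlem} and Corollary~\ref{traceliftcor}.

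First I would choose $l\in\IN$ and $k\in\{0,1\}$ so that $2l+k=n$ (recall $n=\max(\lfloor d/2-2\rfloor+1,0)$), and apply Lemma~\ref{leibnizsplitlem} to $\widetilde{B}$, giving for $f\in C_{c}^{\infty}\left(\IR^{d}\right)\otimes\mathcal{D}$
\begin{align}
	\widetilde{B}\left(X\right)f=\left(ic\nabla+1\right)^{-k}\left(\Delta+1\right)^{-l}\sum_{\left|\gamma+\delta\right|\leq n}C^{k,l}_{\gamma,\delta}\left(\p^{\gamma,End}\widetilde{B}\right)\left(X\right)\p^{\delta}f.
\end{align}
Substituting this into $\left(H_{0}+1\right)^{-1}B\left(X\right)\left(H_{0}+1\right)^{-N}=\left(H_{0}+1\right)^{-1}\widetilde{B}\left(X\right)\langle\widehat{A_{0}}\rangle^{\alpha}\left(H_{0}+1\right)^{-N}$ and using that the resolvents of $c\nabla$ and $\Delta$ commute with $H_{0}$ and with each other (and that $\left(\Delta+1\right)=\langle c\nabla\rangle^{2}$), each summand takes the shape
\begin{align}
	\left(H_{0}+1\right)^{-1}\langle c\nabla\rangle^{-2l}\langle c\nabla\rangle^{-k}\,C^{k,l}_{\gamma,\delta}\left(\p^{\gamma,End}\widetilde{B}\right)\left(X\right)\p^{\delta}\langle\widehat{A_{0}}\rangle^{\alpha}\left(H_{0}+1\right)^{-N},
\end{align}
which, after commuting $\langle\widehat{A_{0}}\rangle^{\alpha}$ past $\p^{\delta}$ and absorbing the constant Clifford matrix $C^{k,l}_{\gamma,\delta}$, is exactly an operator of the form treated in Corollary~\ref{traceliftcor} with $B$ there replaced by $\p^{\gamma,End}\widetilde{B}$, with $s_{1}=0$, $t_{1}=-2l-k\le 0$ absorbed into $u_{1}$, $u_{1}=1+l$, on the right $s_{2}=\alpha$, the multi-index $\delta$ with $|\delta|\le n$, and $u_{2}=N$. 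Each $\p^{\gamma,End}\widetilde{B}$, $|\gamma|\le n$, satisfies the $S^{1}$-integrability hypothesis of the Corollary by assumption.

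The last step is the bookkeeping of the exponents: I must check the admissibility inequalities $u_{j}\ge s_{j}/2$ and $t_{j}<2u_{j}-s_{j}-d/2$ of Corollary~\ref{traceliftcor} for each summand. On the left this is immediate since $s_{1}=0$, $u_{1}\ge 1$. On the right we need $N\ge\alpha/2$ and, crucially, $|\delta|\le t_{2}$ with $t_{2}<2N-\alpha-d/2$; since $|\delta|\le n$, it suffices that $n<2N-\alpha-d/2$, i.e. $N>\frac{\alpha}{2}+\frac{n}{2}+\frac{d}{4}$, which is precisely the hypothesis. (One also folds the nonpositive power $\langle c\nabla\rangle^{-2l-k}$ on the left into the $\left(H_{0}+1\right)^{-u_{1}}$ factor via $\langle c\nabla\rangle^{2}\le H_{0}+1$ up to the bounded operator $\langle c\nabla\rangle^{2}\left(H_{0}+1\right)^{-1}$, so no lower bound on those exponents is needed.) Summing the finitely many trace-class pieces gives the trace-class extension, and density of the domain $\rg\left(\left.\left(H_{0}+1\right)^{N}\right|_{C_{c}^{\infty}\left(\IR^{d}\right)\otimes\mathcal{D}}\right)$ is Lemma~\ref{densesetlem}. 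The main obstacle I anticipate is purely organizational: making the manipulation $B\left(X\right)f=\widetilde{B}\left(X\right)\langle\widehat{A_{0}}\rangle^{\alpha}f$ and the subsequent commutations rigorous on the dense core — in particular verifying that all intermediate vectors stay in the relevant domains so that the formal rearrangements are valid before one passes to the closure — rather than any genuinely new estimate beyond those already established.
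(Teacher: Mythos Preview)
Your proposal is correct and follows essentially the same route as the paper. The only cosmetic difference is the order of operations: you peel off $\langle A_{0}\rangle^{-\alpha}$ first and apply Lemma~\ref{leibnizsplitlem} to $\widetilde{B}=B\langle A_{0}\rangle^{-\alpha}$, whereas the paper applies Lemma~\ref{leibnizsplitlem} directly to $B$ and only afterwards inserts $\langle A_{0}\rangle^{-\alpha}\langle\widehat{A_{0}}\rangle^{\alpha}$ between $(\p^{\gamma,End}B)(X)$ and $\p^{\delta}$; since $\langle A_{0}\rangle^{-\alpha}$ is constant in $x$ these are equivalent. Your exponent bookkeeping on the right ($s_{2}=\alpha$, $t_{2}=n$, $u_{2}=N$, giving exactly $N>\tfrac{\alpha}{2}+\tfrac{n}{2}+\tfrac{d}{4}$) matches the paper verbatim; on the left the paper is slightly tidier, writing $(ic\nabla+1)^{-k}(\Delta+1)^{-l}=\langle c\nabla\rangle^{k}(1+ic\nabla)^{-k}\cdot\langle c\nabla\rangle^{-n}$ with the first factor bounded and then invoking Corollary~\ref{traceliftcor} with $u_{1}=1$, $s_{1}=0$, $t_{1}=-n$, whereas your phrasing about ``absorbing'' into $u_{1}=1+l$ is a bit loose---but the content is the same and the constraint $-n<2-\tfrac{d}{2}$ is satisfied by the very definition of $n$.
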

	
	\begin{proof}
		Let $f\in\rg\left(\left.\left(H_{0}+1\right)^{N}\right|_{C_{c}^{\infty}\left(\IR^{d}\right)\otimes\mathcal{D}}\right)$, which is dense in $L^2\left(\IR^{d},H^{r}\right)$ by Lemma \ref{densesetlem}. Then $g=\left(H_{0}+1\right)^{-N}f\in C_{c}^{\infty}\left(\IR^{d}\right)\otimes\mathcal{D}$. Lemma \ref{densemultiplierdomlem} then implies that
		\begin{align}
			\left(H_{0}+1\right)^{-1}B\left(X\right)\left(H_{0}+1\right)^{-N}
		\end{align}
		is densely defined. Let $2l+k=n$ with $k\in\left\{0,1\right\}$ and $l\in\IN$, and denote\break$B_{\gamma,\delta}^{k,l}\left(x\right):=C_{\gamma,\delta}^{k,l}\left(\p^{\gamma,End}C\right)\left(x\right)\langle A_{0}\rangle^{-\alpha}$ for $\gamma,\delta\in\IN^{d}$. Then Lemma \ref{leibnizsplitlem} implies 
		\begin{align}\label{tracepropeq1}
			&\left(H_{0}+1\right)^{-1}B\left(X\right)\left(H_{0}+1\right)^{-N}f=\left(H_{0}+1\right)^{-1}B\left(X\right)g\nonumber\\
			=&\left(H_{0}+1\right)^{-1}\left(ic\nabla+1\right)^{-k}\left(\Delta+1\right)^{-l}\sum_{\stackrel{\gamma,\delta\in\IN^{d}}{\left|\gamma+\delta\right|\leq n}}C_{\gamma,\delta}^{k,l}\left(\p^{\gamma,End}B\right)\left(X\right)\p^{\delta}g\nonumber\\
			=&\sum_{\stackrel{\gamma,\delta\in\IN^{d}}{\left|\gamma+\delta\right|\leq n}}\left(H_{0}+1\right)^{-1}\left(ic\nabla+1\right)^{-k}\left(\Delta+1\right)^{-l}C_{\gamma,\delta}^{k,l}\left(\p^{\gamma,End}B\right)\left(X\right)\p^{\delta}\left(H_{0}+1\right)^{-N}f\nonumber\\
			=&\langle c\nabla\rangle^{k}\left(1+ic\nabla\right)^{-k}\sum_{\stackrel{\gamma,\delta\in\IN^{d}}{\left|\gamma+\delta\right|\leq n}}\langle c\nabla\rangle^{-n}\left(H_{0}+1\right)^{-1}B_{\gamma,\delta}^{k,l}\left(X\right)\langle\widehat{A_{0}}\rangle^{\alpha}\p^{\delta}\left(H_{0}+1\right)^{-N}f.
		\end{align}
		The operator $\langle c\nabla\rangle^{k}\left(1+ic\nabla\right)^{-k}$ is bounded. For all $\gamma,\delta\in\IN^{d}$ with $\left|\gamma+\delta\right|\leq n$, the operators
		\begin{align}
			\langle c\nabla\rangle^{-n}\left(H_{0}+1\right)^{-1}B_{\gamma,\delta}^{k,l}\left(X\right)\langle\widehat{A_{0}}\rangle^{\alpha}\p^{\delta}\left(H_{0}+1\right)^{-N}
		\end{align}
		admit trace-class extensions, which follows by Corollary \ref{traceliftcor}, because
		\begin{align}
			B^{k,l}_{\gamma,\delta}\in L^{2,End}_{loc}\left(\IR^{d},\left(\mathcal{D},H^{r}\right)\right),
			\end{align}
			 and
		\begin{align}
			\int_{\IR^{d}}\left\|B^{k,l}_{\gamma,\delta}\left(x\right)\right\|_{S^1\left(H^{r}\right)}<\infty,
		\end{align}
		by assumption on $B$, and choosing $u_{1}=1$, $s_{1}=0$, $t_{1}=-n$, $u_{2}=N$, $s_{2}=\alpha$, and $t_{2}=n$. Consequently $\left(H_{0}+1\right)^{-1}B\left(X\right)\left(H_{0}+1\right)^{-N}$ admits a trace-class extension.
	\end{proof}

	By a complex interpolation argument, we may extend the trace-class membership of $\left(H_{0}+1\right)^{-1}B\left(X\right)\left(H_{0}+1\right)^{-N}$ to the operator $\left(H_{0}+1\right)^{-m-1}B\left(X\right)\left(H_{0}+1\right)^{-N+m}$.
	
	\begin{Corollary}\label{traceinterpolcor}
		Let $B$, $\alpha$, $n$, and $N$ satisfy the same prerequisites as in Proposition \ref{traceprop}. Additionally assume that $B^{\ast}\in W^{n,2,End}_{loc}\left(\IR^{d},\left(\mathcal{D},H^{r}\right)\right)$, and
		\begin{align}
			\int_{\IR^{d}}\left\|\p^{\beta,End}\left(B^{\ast}\langle A_{0}\rangle^{-\alpha}\right)\left(x\right)\right\|_{S^1\left(H^{r}\right)}\Id x<\infty.
		\end{align}
		Then for all $m\in\IN$ with $0\leq m\leq N-1$ the operators
		\begin{align}
			\left(H_{0}+1\right)^{-m-1}B\left(X\right)\left(H_{0}+1\right)^{-N+m}
		\end{align}
		are densely defined on $\rg\left(\left.\left(H_{0}+1\right)^{N}\right|_{C_{c}^{\infty}\left(\IR^{d}\right)\otimes\mathcal{D}}\right)$, and admit trace-class extensions in $L^2\left(\IR^{d},H^{r}\right)$.
	\end{Corollary}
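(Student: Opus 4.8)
The strategy is to obtain the endpoints $m=0$ and $m=N-1$ from Proposition \ref{traceprop} and the $\ast$-symmetry, and to reach the intermediate values $1\le m\le N-2$ by a single complex-interpolation step. For $m=0$ the assertion is exactly Proposition \ref{traceprop}; I write $T_{0}\in S^{1}\left(L^2\left(\IR^{d},H^{r}\right)\right)$ for the trace-class extension of $\left(H_{0}+1\right)^{-1}B\left(X\right)\left(H_{0}+1\right)^{-N}$ it furnishes. For $m=N-1$ I would apply Proposition \ref{traceprop} to the family $B^{\ast}=\left(B\left(x\right)^{\ast}\right)_{x\in\IR^{d}}$ --- admissible precisely because of the additional hypotheses on $B^{\ast}$ assumed here --- obtaining a trace-class extension $T_{0}'$ of $\left(H_{0}+1\right)^{-1}B^{\ast}\left(X\right)\left(H_{0}+1\right)^{-N}$; since $S^{1}$ is $\ast$-closed, $S_{0}:=\left(T_{0}'\right)^{\ast}\in S^{1}$ is then a trace-class extension of $\left(H_{0}+1\right)^{-N}B\left(X\right)\left(H_{0}+1\right)^{-1}$, the associated form identity being verified on the dense set $\mathcal{R}:=\rg\left(\left.\left(H_{0}+1\right)^{N}\right|_{C_{c}^{\infty}\left(\IR^{d}\right)\otimes\mathcal{D}}\right)$ using $B^{\ast}\left(x\right)=B\left(x\right)^{\ast}$. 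The density claim for every $m$ is immediate: since $H_{0}+1=\Delta+\widehat{A_{0}}^{2}+1$ maps $C_{c}^{\infty}\left(\IR^{d}\right)\otimes\mathcal{D}$ into itself, one has $\mathcal{R}\subseteq C_{c}^{\infty}\left(\IR^{d}\right)\otimes\mathcal{D}$ and $\left(H_{0}+1\right)^{-N+m}$ sends $\mathcal{R}$ into $C_{c}^{\infty}\left(\IR^{d}\right)\otimes\mathcal{D}\subseteq\dom\left(B\left(X\right)\right)$ by Lemma \ref{densemultiplierdomlem}; a short computation on $C_{c}^{\infty}\left(\IR^{d}\right)\otimes\mathcal{D}$ further shows that on $\mathcal{R}$ one has $\left(H_{0}+1\right)^{-m-1}B\left(X\right)\left(H_{0}+1\right)^{-N+m}=\left(H_{0}+1\right)^{-m}T_{0}\left(H_{0}+1\right)^{m}$.

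For $1\le m\le N-2$ I would interpolate along the family
\begin{align}
\Psi\left(z\right):=\left(H_{0}+1\right)^{-(N-1)z}\,T_{0}\,\left(H_{0}+1\right)^{(N-1)z},\qquad z\in\Sigma:=\left\{z\in\IC\,:\,0\le\mathrm{Re}\,z\le 1\right\}.
\end{align}
By the previous paragraph $\Psi$ restricts on $\mathcal{R}$ to the operator in question at $z=m/(N-1)$, equals $T_{0}$ at $z=0$, and is extended by $S_{0}$ at $z=1$; the last point uses that $\mathcal{R}$ is a core for $\left(H_{0}+1\right)^{N-1}$, which follows from Lemma \ref{densesetlem} applied with $s=2N-1$, since then $\left(H_{0}+1\right)^{N-1}\mathcal{R}=\rg\left(\left.\left(H_{0}+1\right)^{2N-1}\right|_{C_{c}^{\infty}\left(\IR^{d}\right)\otimes\mathcal{D}}\right)$ is dense. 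On the boundary line $\mathrm{Re}\,z=0$ the operator $\Psi\left(it\right)$ is the conjugate of $T_{0}$ by the unitary $\left(H_{0}+1\right)^{(N-1)it}$, hence lies in $S^{1}$ with $\left\|\Psi\left(it\right)\right\|_{S^{1}}=\left\|T_{0}\right\|_{S^{1}}$; on $\mathrm{Re}\,z=1$ the extension of $\Psi\left(1+it\right)$ is unitarily conjugate to $S_{0}$, with trace norm $\left\|S_{0}\right\|_{S^{1}}$. A Hadamard three-lines argument for the trace norm then forces $\Psi\left(z\right)\in S^{1}$ with $\left\|\Psi\left(z\right)\right\|_{S^{1}}\le\max\left(\left\|T_{0}\right\|_{S^{1}},\left\|S_{0}\right\|_{S^{1}}\right)$ throughout $\Sigma$, and evaluation at $z=m/(N-1)$ yields the desired trace-class extension.

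The genuinely delicate point --- and the main obstacle --- is that in the interior of $\Sigma$ the factor $\left(H_{0}+1\right)^{(N-1)z}$ is unbounded, so $\Psi\left(z\right)$ is a priori only densely defined and the three-lines theorem does not apply directly to an $S^{1}$-valued holomorphic function. I would repair this by inserting the spectral cut-offs $P_{n}:=\one_{\left[1,n\right]}\left(H_{0}+1\right)$: the regularised functions $\Psi_{n}\left(z\right):=\left(H_{0}+1\right)^{-(N-1)z}P_{n}T_{0}P_{n}\left(H_{0}+1\right)^{(N-1)z}$ are, for each fixed $n$, entire and $S^{1}$-valued with $\left\|\Psi_{n}\right\|_{S^{1}}$ bounded on $\Sigma$ --- because $P_{n}\left(H_{0}+1\right)^{\pm(N-1)z}$ extends to a norm-entire bounded operator --- and their boundary values are unitarily conjugate to $P_{n}T_{0}P_{n}$ and $P_{n}S_{0}P_{n}$ respectively; the classical three-lines theorem then gives $\left\|\Psi_{n}\left(z\right)\right\|_{S^{1}}\le\max\left(\left\|T_{0}\right\|_{S^{1}},\left\|S_{0}\right\|_{S^{1}}\right)$ uniformly in $n$. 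Since $\Psi_{n}\left(m/(N-1)\right)$ converges strongly on $\mathcal{R}$ to $\left(H_{0}+1\right)^{-m-1}B\left(X\right)\left(H_{0}+1\right)^{-N+m}$, any weak-$\ast$ limit point of $\left(\Psi_{n}\left(m/(N-1)\right)\right)_{n}$ in $S^{1}$, regarded as the dual of the compact operators, is the sought trace-class extension. The remaining items --- the form identities, the core property via Lemma \ref{densesetlem}, and the entirety and boundedness of the $\Psi_{n}$ --- are routine manipulations with the commuting functional calculi of $\Delta$ and $\widehat{A_{0}}$.
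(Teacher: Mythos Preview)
Your proof is correct and follows the same overall strategy as the paper: obtain the endpoints $m=0$ and $m=N-1$ from Proposition~\ref{traceprop} applied to $B$ and to $B^{\ast}$, and reach the intermediate $m$ by complex interpolation in the self-adjoint scale of $H_{0}$. The difference is only in how the interpolation step is executed. The paper packages it by introducing the closable operator $S_{0}=\left(H_{0}+1\right)^{-1}B\left(X\right)\left(H_{0}+1\right)^{-1}$ with closure $S$, setting $T=\left(H_{0}+1\right)^{N-1}$, observing that $ST^{-1}$ and $S^{\ast}T^{-1}$ are trace class, and then invoking the abstract interpolation theorem of Gesztesy--Latushkin--Sukochev--Tomilov (Theorem~3.2 in \cite{GeLaSuTo}) to conclude that $T^{-x}ST^{-1+x}\in S^{1}$ for $x\in\left[0,1\right]$. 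You instead reprove that interpolation result in the concrete situation: regularising with the spectral projections $P_{n}=\one_{\left[1,n\right]}\left(H_{0}+1\right)$, applying the three-lines theorem to the entire $S^{1}$-valued family $\Psi_{n}$, and passing to a weak-$\ast$ limit in $S^{1}$. Your identification of the boundary value at $\mathrm{Re}\,z=1$ with $P_{n}S_{0}P_{n}$ via the core property of $\mathcal{R}$ for $\left(H_{0}+1\right)^{N-1}$ (Lemma~\ref{densesetlem}) is exactly the point that makes this work, and it is handled correctly. The paper's route is shorter by outsourcing the analytic machinery; yours is more self-contained and makes the mechanism transparent.
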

	
	\begin{proof}
		First we show that
		\begin{align}
			\left.\left(H_{0}+1\right)^{-m-1}B\left(X\right)\left(H_{0}+1\right)^{-N+m}\right|_{\rg\left(\left.\left(H_{0}+1\right)^{N}\right|_{C_{c}^{\infty}\left(\IR^{d}\right)\otimes\mathcal{D}}\right)}
		\end{align}
		is densely defined. This follows by Lemma \ref{densesetlem} and Lemma \ref{densemultiplierdomlem} and if we note that $\left(H_{0}+1\right)^{m}$ maps $C_{c}^{\infty}\left(\IR^{d}\right)\otimes\mathcal{D}$ to itself.\\
		Now consider the operator $S_{0}:=\left(H_{0}+1\right)^{-1}B\left(X\right)\left(H_{0}+1\right)^{-1}$. $S_{0}$ is densely defined on
		\begin{align}
			\rg\left(\left.\left(H_{0}+1\right)\right|_{C_{c}^{\infty}\left(\IR^{d}\right)\otimes\mathcal{D}}\right)\subseteq\dom\left(S_{0}\right),
		\end{align}
		by Lemma \ref{densesetlem} and Lemma \ref{densemultiplierdomlem}. We also find
		\begin{align}\label{traceinterpolcoreq1}
			S_{0}^{\ast}&\supseteq\left(H_{0}+1\right)^{-1}\left(B\left(X\right)\right)^{\ast}\left(H_{0}+1\right)^{-1}\nonumber\\
			&\supseteq\left.\left(H_{0}+1\right)^{-1}B^{\ast}\left(X\right)\left(H_{0}+1\right)^{-1}\right|_{\rg\left(\left.\left(H_{0}+1\right)\right|_{C_{c}^{\infty}\left(\IR^{d}\right)\otimes\mathcal{D}}\right)},
		\end{align}
		which shows that $S_{0}^{\ast}$ is densely defined. Since $S_{0}$, $S_{0}^{\ast}$ are both densely defined, $S_{0}$ is closable and admits therefore a closure $S$.
		Denote $T=\left(H_{0}+1\right)^{N-1}$. Then by Corollary \ref{traceprop} the operator $S_{0}T^{-1}$ admits a trace-class extension $R$, which is thus bounded, therefore $R=\overline{S_{0}T^{-1}}$. Since $ST^{-1}$ is also a closed extension of $S_{0}T^{-1}$, we conclude $R=ST^{-1}$.
		Because $S_{0}^{\ast}=S^{\ast}$, and if we consider (\ref{traceinterpolcoreq1}), the operator $S^{\ast}T^{-1}$ is a closed extension of
		\begin{align}
			\left.\left(H_{0}+1\right)^{-1}B^{\ast}\left(X\right)\left(H_{0}+1\right)^{-N}\right|_{\rg\left(\left.\left(H_{0}+1\right)^{N}\right|_{C_{c}^{\infty}\left(\IR^{d}\right)\otimes\mathcal{D}}\right)},
		\end{align}
		which is densely defined and admits a trace-class extension $R'$ by Corollary \ref{traceliftcor}, which must be its closure. Therefore $S^{\ast}T^{-1}=R'$. We thus meet the prerequisites of an interpolation theorem (Theorem 3.2 \cite{GeLaSuTo}), ensuring that for $x\in\left[0,1\right]$ the operators $T^{-x}ST^{-1+x}$ defined on $\dom\left(T\right)$ are closable and $R_{x}=\overline{T^{-x}ST^{-1+x}}$ are trace-class operators. Now choose $x=\frac{m}{N-1}$, then
		\begin{align}
			\left.R_{\frac{m}{N-1}}\right|_{\rg\left(\left.\left(H_{0}+1\right)^{N}\right|_{C_{c}^{\infty}\left(\IR^{d}\right)\otimes\mathcal{D}}\right)}=\left.\left(H_{0}+1\right)^{-m-1}B\left(X\right)\left(H_{0}+1\right)^{-N+m}\right|_{\rg\left(\left.\left(H_{0}+1\right)^{N}\right|_{C_{c}^{\infty}\left(\IR^{d}\right)\otimes\mathcal{D}}\right)},
		\end{align}
		which finishes the proof.
	\end{proof}

	We have now all ingredients prepared to show the principal result of this work. Before stating the main theorem, let us summarize all accumulated conditions on the operator family $A$ in the following Hypothesis for convenience. Let us introduce the notation
	\begin{align}
		c\nabla^{End}A&:=\sum_{j=1}^{d}c\left(\Id x^{j}\right)\p_{x^{j}}^{End}A,\nonumber\\
		\nabla^{End}Ac&:=\sum_{j=1}^{d}\p_{x^{j}}^{End}Ac\left(\Id x^{j}\right).
	\end{align}

	\begin{Hypothesis}\label{mainassumpt}
		
		Denote $n:=\max\left(\lfloor\frac{d}{2}-2\rfloor+1,0\right)$. Let $\alpha,\beta\in\IR^{\geq 0}$ be fixed.
		
		Let $A_{0}$ be a self-adjoint operator in $H^{r}$, and let $\mathcal{D}=\bigcup_{n\in\IN}\rg\left(\one_{\left[-n,n\right]}\left(A_{0}\right)\right)$ and assume that $c\left(\Id x^{j}\right)\left(\mathcal{D}\right)\subseteq\dom\left(A_{0}\right)$, $j\in\left\{1,\ldots,d\right\}$.
		
		Let $A=\left(A\left(x\right)\right)_{x\in\IR^{d}}$ be a family of symmetric operators in $H^{r}$ with $\mathcal{D}\subseteq\dom\left(A\left(x\right)\right)$. If $A\left(x\right)$ commutes with $c\left(\Id x^{j}\right)$ for all $x\in\IR^{d}$ and $j\in\left\{1,\ldots,d\right\}$, then let $N\in\IN$ with $N>\frac{\alpha}{2}+\frac{n}{2}+\frac{d}{4}$, otherwise let $N>\max\left(\frac{\alpha}{2}+\frac{n}{2}+\frac{d}{4},\frac{\beta}{2}+\frac{n}{2}+\frac{d}{4}+\frac{1}{2}\right)$.
		
		Assume that $A\in W^{2N-1,2,End}_{loc}\left(\IR^{d},\left(\mathcal{D},H^{r}\right)\right)$, and
		\begin{align}\label{mainassumpteq1}
			\int_{\IR^{d}}\left\|\p^{\gamma,End}\left(c\nabla^{End}A\langle A_{0}\rangle^{-\alpha}\right)\left(x\right)\right\|_{S^1\left(H^{r}\right)}\Id x&<\infty,\nonumber\\
			\int_{\IR^{d}}\left\|\p^{\gamma,End}\left(\nabla^{End}Ac\langle A_{0}\rangle^{-\alpha}\right)\left(x\right)\right\|_{S^1\left(H^{r}\right)}\Id x&<\infty,\nonumber\\
			\int_{\IR^{d}}\left\|\p^{\gamma,End}\left(\left[c\left(\Id x^{j}\right),A\right]\langle A_{0}\rangle^{-\beta}\right)\left(x\right)\right\|_{S^1\left(H^{r}\right)}\Id x&<\infty,\ j\in\left\{1,\ldots,d\right\},
		\end{align}
		for all $\gamma\in\IN^{d}$ with $\left|\gamma\right|\leq n$. Assume that for $\gamma\in\IN^{d}$, $1\leq\left|\gamma\right|\leq 2N-1$, there exist $t\in\left[0,\frac{\left|\gamma\right|}{2}\right]$ and $p\in\left[2,+\infty\right]$ such that
		\begin{align}\label{mainassumpteq2}
			\left\|x\mapsto\left\|\left(\p^{\gamma,End}A\right)\left(x\right)\langle A_{0}\rangle^{-2t}\right\|_{B\left(H^{r}\right)}\right\|_{L^{p}\left(\IR^{d}\right)}<\infty,\ \text{for }\frac{\left|\gamma\right|}{2}-t>\frac{d}{2p},
		\end{align}
		or
		\begin{align}\label{mainassumpteq3}
			&\left\|x\mapsto\left\|\left(\p^{\gamma,End}A\right)\left(x\right)\left(A_{0}^{2}+z\right)^{-t}\right\|_{B\left(H^{r}\right)}\right\|_{L^{p}\left(\IR^{d}\right)}=o\left(1\right),\ z\to+\infty,\nonumber\\
			&\text{for }\left(\frac{\left|\gamma\right|}{2}=t, p=2\right)\vee\left(d\geq 3,\frac{d}{p}\in\IN,\frac{\left|\gamma\right|}{2}-t=\frac{d}{2p}\right).
		\end{align}
		Finally also assume
		\begin{align}\label{mainassumpteq4}
			\esssup_{x\in\IR^{d}}\left\|A_{0}^{k}\left(A\left(x\right)-A_{0}\right)\left(A_{0}^{2}+z\right)^{-\frac{k+1}{2}}\right\|_{B\left(H^{r}\right)}=o\left(1\right),\ z\to+\infty,\ 0\leq k\leq 2N-1.
		\end{align}
	\end{Hypothesis}

Since Hypothesis \ref{mainassumpt} comprises of several technical conditions on the family $A$, we shall discuss their necessity and differentiate their importance. Let us first consider the set of conditions (\ref{mainassumpteq2}), (\ref{mainassumpteq3}), and (\ref{mainassumpteq4}), which are chosen such that $\dom\left(\left(D^{\ast}D\right)^{N}\right)=\dom\left(\left(DD^{\ast}\right)^{N}\right)=W^{2N,2}\left(\IR^{d},H^{r}\right)\cap L^2\left(\IR^{d},\dom\left(A_{0}^{2N}\right)\right)$, see Proposition \ref{hdomainprop}. We may categorize these conditions as "minor" since they do not pertain to trace-class properties of $D$ but only of its domain. The essential conditions on $A$, central to trace-class properties of $D$, are given by (\ref{mainassumpteq1}), and they are in analogy to the conditions (\ref{pushcond}) in \cite{Push} and (\ref{gescond}) in \cite{GLMST}. We should note that the additional derivatives $\p^{\gamma,End}$ for $\gamma\leq n$ appear only in dimension $d\geq 4$, while the entire second line of (\ref{mainassumpteq1}) is not present for $d=1$, since w.l.o.g. Clifford multiplication is scalar and therefore commutes with any operator family. We also note that the requirement on the Clifford multiplication, $c\left(\Id x^{j}\right)\left(\mathcal{D}\right)\subseteq\dom\left(A_{0}\right)$, $j\in\left\{1,\ldots,d\right\}$, is always satisfied if $A_{0}$ and $c\left(\Id x^{j}\right)$ commute.

Let us illustrate the essential conditions by giving a simple example concerned with operator families generated by a perturbed Dirac operator on $\IR^{m}$, which is similar to the discussed case in \cite{Car} for a diagonal matrix potential.

	\begin{Example}\label{example}
		Denote $n:=\max\left(\lfloor\frac{d}{2}-2\rfloor+1,0\right)$.
		
		Let $m,s\in\IN$ and let $\widehat{c}$ be a Clifford multiplication over $\IR^{m}$ in $\IC^{s}$, and consider the self-adjoint Dirac operator $A_{0}:=\widehat{c}\ \widehat{\nabla}$ in $H^{r}$ for $H=L^2\left(\IR^{m},\IC^{s}\right)$.
		
		Let $\delta>\frac{m}{2}$, $\alpha>\frac{m}{2}+\delta$, and let $N\in\IN$ with $N>\frac{\alpha}{2}+\frac{n}{2}+\frac{d}{4}$.
		
		Let $v:\IR^{d}\times\IR^{m}\rightarrow\IR$ be a $C^{2N-1}$-function such that its derivatives are bounded, and that 
		\begin{align}\label{exampleeq1}
			\int_{\IR^{d}}\int_{\IR^{m}}\left|\p^{\gamma'}_{x}v\left(x,y\right)\right|\langle y\rangle^{\delta}\Id y\ \Id x<\infty,\ 1\leq\left|\gamma'\right|\leq n+1.
		\end{align}
		Define
		\begin{align}
			\left(V\left(x\right)f\right)\left(y\right)&:=v\left(x,y\right)f\left(y\right),\ x\in\IR^{d},\ y\in\IR^{m},\nonumber\\
			\dom\left(V\left(x\right)\right)&=H^{r}=L^2\left(\IR^{m},\IC^{s}\right)\otimes\IC^{r},\ x\in\IR^{d},\nonumber\\
			A\left(x\right)&:=A_{0}+V\left(x\right),\ x\in\IR^{d},\nonumber\\
			\dom\left(A\left(x\right)\right)&=\dom\left(A_{0}\right)=W^{1,2}\left(\IR^{m},\IC^{s}\right)\otimes\IC^{r},\ x\in\IR^{d}.
		\end{align}
		Because $v$ is smooth, we have $A\in W^{2N-1,2,End}_{loc}\left(\IR^{d},\left(\mathcal{D},H^{r}\right)\right)$. Furthermore we note that for all $\gamma\in\IN^{d}$ with $1\leq\left|\gamma\right|\leq 2N-1$ one obtains
	\begin{align}
		\sup_{x\in\IR^{d},y\in\IR^{m}}\left|\p^{\gamma}_{x}v\left(x,y\right)\right|=:C_{\gamma}<\infty,
	\end{align}
	which yields condition (\ref{mainassumpteq2}). Because $v$ is smooth with bounded derivatives, we also find
	\begin{align}
		&\sup_{x\in\IR^{d},y\in\IR^{m}}\left|\p^{\gamma}_{x}v\left(x,y\right)\right|\left\|\left(A_{0}^{2}+z\right)^{-\frac{k+1}{2}}\right\|_{B\left(H^{r}\right)}=C_{\gamma}z^{-\frac{k+1}{2}}\nonumber\\
		=&o\left(1\right),\ z\to+\infty, 0\leq\left|\gamma\right|=k\leq 2N-1,
	\end{align}
	which yields condition (\ref{mainassumpteq4}). The essential condition (\ref{mainassumpteq1}) reads
	\begin{align}
		\int_{\IR^{d}}\left\|\left(\p^{\gamma'}V\right)\left(x\right)\langle\widehat{c}\ \widehat{\nabla}\rangle^{-\alpha}\right\|_{S^1\left(H^{r}\right)}\Id x<\infty,\ 1\leq\left|\gamma'\right|\leq n+1,
	\end{align}
	which is satisfied according to (Corollary 4.8, \cite{Simon}) and (\ref{exampleeq1}). The second and third line of condition (\ref{mainassumpteq1}) is voided since $V$ commutes with $\widehat{c}$.
	
	The operator family $A$ therefore satisfies Hypothesis \ref{mainassumpt} and is admissible for the following Theorem \ref{mainthm}. It states that the associated Callias operator $D=ic\nabla+A\left(X\right)$ in this example satisfies
	\begin{align}
		\left(D^{\ast}D+1\right)^{-N}-\left(DD^{\ast}+1\right)^{-N}\in S^1\left(L^2\left(\IR^{d},H^{r}\right)\right)=S^1\left(L^2\left(\IR^{d+m},\IC^{rs}\right)\right).
	\end{align}
	\end{Example}

	We state the principal result of this work.
	
	\begin{Theorem}\label{mainthm}
		Assume Hypothesis \ref{mainassumpt}. Then
		\begin{align}
			\left(D^{\ast}D+1\right)^{-N}-\left(DD^{\ast}+1\right)^{-N}\in S^1\left(L^2\left(\IR^{d},H^{r}\right)\right).
		\end{align}
	\end{Theorem}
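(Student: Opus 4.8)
The plan is to telescope between the resolvents of $D^{\ast}D$ and $DD^{\ast}$, transfer the resulting resolvent powers to the model operator $H_{0}$, and identify each summand with an operator of the kind controlled by Corollary \ref{traceinterpolcor}. On $C_{c}^{\infty}\left(\IR^{d}\right)\otimes\mathcal{D}$ one has $D^{\ast}D=\Delta+A\left(X\right)^{2}-i[c\nabla,A\left(X\right)]$ and $DD^{\ast}=\Delta+A\left(X\right)^{2}+i[c\nabla,A\left(X\right)]$, hence $DD^{\ast}-D^{\ast}D=2i[c\nabla,A\left(X\right)]$ and, by the identity $P^{-N}-Q^{-N}=\sum_{k=0}^{N-1}P^{-(k+1)}(Q-P)Q^{-(N-k)}$,
\begin{align}
	\left(D^{\ast}D+1\right)^{-N}-\left(DD^{\ast}+1\right)^{-N}=2i\sum_{k=0}^{N-1}\left(D^{\ast}D+1\right)^{-(k+1)}[c\nabla,A\left(X\right)]\left(DD^{\ast}+1\right)^{-(N-k)}
\end{align}
on $\rg\left(\left.\left(H_{0}+1\right)^{N}\right|_{C_{c}^{\infty}\left(\IR^{d}\right)\otimes\mathcal{D}}\right)$ (Lemma \ref{densesetlem}). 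By Proposition \ref{hdomainprop}, whose hypotheses up to derivative order $2N-1$ are contained in Hypothesis \ref{mainassumpt}, the operators $\left(D^{\ast}D\right)^{j}$, $\left(DD^{\ast}\right)^{j}$ and $H_{0}^{j}$ are self-adjoint on a common domain with equivalent graph norms for $1\le j\le N$, so $\left(H_{0}+1\right)^{j}\left(D^{\ast}D+1\right)^{-j}$ and $\left(H_{0}+1\right)^{j}\left(DD^{\ast}+1\right)^{-j}$ are bounded. Writing the resolvent powers in the sum as $\left(H_{0}+1\right)^{-j}$ times these bounded operators and taking adjoints (using $[c\nabla,A\left(X\right)]^{\ast}=-[c\nabla,A\left(X\right)]$ and self-adjointness of the resolvents), one checks that the $k$-th summand is $M_{k}$ sandwiched between two bounded operators, where $M_{j}:=\left(H_{0}+1\right)^{-(j+1)}[c\nabla,A\left(X\right)]\left(H_{0}+1\right)^{-(N-j)}$; hence it suffices to show $M_{j}\in S^{1}$ for every $0\le j\le N-1$.

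Next I would split $[c\nabla,A\left(X\right)]=\left(c\nabla^{End}A\right)\left(X\right)+\sum_{l=1}^{d}[c\left(\Id x^{l}\right),A\left(X\right)]\p_{x^{l}}$. The multiplicative part contributes $\left(H_{0}+1\right)^{-(j+1)}\left(c\nabla^{End}A\right)\left(X\right)\left(H_{0}+1\right)^{-(N-j)}$: the family $B:=c\nabla^{End}A$ lies in $W^{n,2,End}_{loc}$, its adjoint family agrees on $\mathcal{D}$ with $-\nabla^{End}Ac$ (symmetry of $A$ and $c\left(\Id x^{l}\right)^{\ast}=-c\left(\Id x^{l}\right)$), and the first two lines of (\ref{mainassumpteq1}) give the $S^{1}$-integrability of $\p^{\gamma,End}\left(B\langle A_{0}\rangle^{-\alpha}\right)$ and $\p^{\gamma,End}\left(\nabla^{End}Ac\,\langle A_{0}\rangle^{-\alpha}\right)$ for $\left|\gamma\right|\le n$; since $N>\tfrac{\alpha}{2}+\tfrac{n}{2}+\tfrac{d}{4}$, Corollary \ref{traceinterpolcor} with $m=j$ applies and puts this term in $S^{1}$. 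If $A\left(x\right)$ commutes with all $c\left(\Id x^{l}\right)$ this is all of $M_{j}$ and the theorem follows.

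For the non-commutative term $M_{j,l}^{(2)}:=\left(H_{0}+1\right)^{-(j+1)}[c\left(\Id x^{l}\right),A\left(X\right)]\p_{x^{l}}\left(H_{0}+1\right)^{-(N-j)}$ I would use that $\p_{x^{l}}$ commutes with $H_{0}$ and $\left\|\left(H_{0}+1\right)^{-1/2}\p_{x^{l}}\right\|\le 1$, so that $[c\left(\Id x^{l}\right),A\left(X\right)]\p_{x^{l}}\left(H_{0}+1\right)^{-(N-j)}=[c\left(\Id x^{l}\right),A\left(X\right)]\left(H_{0}+1\right)^{-(N-j-1/2)}\bigl[\left(H_{0}+1\right)^{-1/2}\p_{x^{l}}\bigr]$ and it suffices to have $\left(H_{0}+1\right)^{-(j+1)}[c\left(\Id x^{l}\right),A\left(X\right)]\left(H_{0}+1\right)^{-(N-j-1/2)}\in S^{1}$. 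The family $[c\left(\Id x^{l}\right),A]$ is self-adjoint-valued, lies in $W^{n,2,End}_{loc}$, and the third line of (\ref{mainassumpteq1}) controls it with $\alpha$ replaced by $\beta$; since $N-\tfrac12>\tfrac{\beta}{2}+\tfrac{n}{2}+\tfrac{d}{4}$, Corollary \ref{traceinterpolcor} with $N$ replaced by $N-\tfrac12$ and $m=j$ gives the claim --- but only as long as $0\le j\le N-\tfrac32$, i.e.\ for $0\le j\le N-2$.

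The remaining index $j=N-1$ is the crux, and I expect it to be the main obstacle, since then the right-hand resolvent power has no room to absorb $\p_{x^{l}}$. (When $N=1$ the thresholds force $d=1$, $n=0$, $\beta<\tfrac12$, and $\sum_{l}M_{0,l}^{(2)}\in S^{1}$ follows directly from Corollary \ref{traceliftcor}; so assume $N\ge 2$.) Here I would pass to the adjoint of $\sum_{l}M_{N-1,l}^{(2)}$: using $\p_{x^{l}}^{\ast}=-\p_{x^{l}}$, self-adjointness of $[c\left(\Id x^{l}\right),A\left(X\right)]$, the Leibniz rule $\p_{x^{l}}[c\left(\Id x^{l}\right),A\left(X\right)]=[c\left(\Id x^{l}\right),A\left(X\right)]\p_{x^{l}}+[c\left(\Id x^{l}\right),\p_{x^{l}}^{End}A]\left(X\right)$, and the identity $\sum_{l}[c\left(\Id x^{l}\right),\p_{x^{l}}^{End}A]=c\nabla^{End}A-\nabla^{End}Ac$, one obtains
\begin{align}
	\Bigl(\sum_{l}M_{N-1,l}^{(2)}\Bigr)^{\ast}=-\sum_{l}M_{0,l}^{(2)}-\left(H_{0}+1\right)^{-1}\left(c\nabla^{End}A-\nabla^{End}Ac\right)\left(X\right)\left(H_{0}+1\right)^{-N}.
\end{align}
The first term lies in $S^{1}$ by the already-treated case $j=0$; the second, since the self-adjoint family $c\nabla^{End}A-\nabla^{End}Ac$ and its derivatives of order $\le n$ are $S^{1}$-integrable after $\langle A_{0}\rangle^{-\alpha}$ (again by the first two lines of (\ref{mainassumpteq1})), lies in $S^{1}$ by Proposition \ref{traceprop} using $N>\tfrac{\alpha}{2}+\tfrac{n}{2}+\tfrac{d}{4}$. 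Hence $\sum_{l}M_{N-1,l}^{(2)}\in S^{1}$, and $M_{j}\in S^{1}$ for all $j$, which proves the theorem. It is precisely this step that explains why (\ref{mainassumpteq1}) must carry the $\nabla^{End}Ac$-condition alongside the $c\nabla^{End}A$-condition, and where the surplus $\tfrac12$ in the threshold for $N$ is used. All the operator identities above are first set up on $\rg\left(\left.\left(H_{0}+1\right)^{N}\right|_{C_{c}^{\infty}\left(\IR^{d}\right)\otimes\mathcal{D}}\right)$ and then extended using Lemmas \ref{densesetlem}, \ref{densemultiplierdomlem} and \ref{leibnizsplitlem}.
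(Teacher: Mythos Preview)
Your argument follows the paper's proof almost verbatim: the same telescoping identity, the same transfer from the resolvents of $D^{\ast}D$ and $DD^{\ast}$ to powers of $(H_{0}+1)^{-1}$ via Proposition~\ref{hdomainprop}, the same splitting of $[c\nabla,A(X)]$ into the multiplicative piece $(c\nabla^{End}A)(X)$ and the commutator pieces $[c(\Id x^{l}),A](X)\partial_{x^{l}}$, and the same appeal to Corollary~\ref{traceinterpolcor} (with parameter $\alpha$ for the first piece and $\beta$ for the second). Your identification $(c\nabla^{End}A)^{\ast}=-\nabla^{End}Ac$ is exactly why both the first and second lines of \eqref{mainassumpteq1} are needed to feed Corollary~\ref{traceinterpolcor}.

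Where you go beyond the paper is at the endpoint $j=N-1$. The paper simply writes that $(H_{0}+1)^{-k-1}[c(\Id x^{j}),A](X)(H_{0}+1)^{-N+k+1/2}$ is trace class for all $0\le k\le N-1$ ``as a direct consequence of Corollary~\ref{traceinterpolcor}''. As you correctly observe, the interpolation behind that corollary (with $N$ replaced by $N-\tfrac12$) only produces exponent pairs $(a,b)$ with $a+b=N+\tfrac12$ and $\min(a,b)\ge 1$, so the pair $(N,\tfrac12)$ at $k=N-1$ is not literally covered. Your adjoint-plus-Leibniz manoeuvre, reducing $\sum_{l}M_{N-1,l}^{(2)}$ to $-\sum_{l}M_{0,l}^{(2)}$ plus $(H_{0}+1)^{-1}(c\nabla^{End}A-\nabla^{End}Ac)(X)(H_{0}+1)^{-N}$, is a clean and correct way to close this; it uses precisely the self-adjointness of $[c(\Id x^{l}),A]$ and the identity $\sum_{l}[c(\Id x^{l}),\partial_{x^{l}}^{End}A]=c\nabla^{End}A-\nabla^{End}Ac$, and the residual term falls directly under Proposition~\ref{traceprop}. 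Your separate treatment of $N=1$ via Corollary~\ref{traceliftcor} is also fine. In short, your proof is the paper's proof, with the borderline case made explicit.
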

	
	\begin{proof}
		A direct consequence of Proposition \ref{hdomainprop} is that $\dom\left(\left(D^{\ast}D\right)^{k}\right)=\dom\left(\left(DD^{\ast}\right)^{k}\right)=\dom\left(H_{0}^{k}\right)$, for $0\leq k\leq N$. Thus by the resolvent identity, we find 
		\begin{align}
			&\left(D^{\ast}D+1\right)^{-N}-\left(DD^{\ast}+1\right)^{-N}=\sum_{k=0}^{N-1}\left(DD^{\ast}+1\right)^{-k-1}\left[D,D^{\ast}\right]\left(D^{\ast}D+1\right)^{-N+k}\nonumber\\
			=&\sum_{k=0}^{N-1}B_{k}\left(H_{0}+1\right)^{-k-1}\left[D,D^{\ast}\right]\left(H_{0}+1\right)^{-N+k}\widetilde{B}_{N-k},
		\end{align}
		where the operators
		\begin{align}
			B_{k}&:=\overline{\left(DD^{\ast}+1\right)^{-k-1}\left(H_{0}+1\right)^{k+1}},\nonumber\\
			\widetilde{B}_{k}&:=\overline{\left(H_{0}+1\right)^{k}\left(D^{\ast}D+1\right)^{-k}},
		\end{align}
		are bounded. Thus it suffices to show that
		\begin{align}
			\left(H_{0}+1\right)^{-k-1}\left[D,D^{\ast}\right]\left(H_{0}+1\right)^{-N+k}\in S^{1}\left(L^2\left(\IR^{d},H^{r}\right)\right), 0\leq k\leq N-1.
		\end{align}
		For $0\leq k\leq N-1$ and $\phi\in\rg\left(\left.\left(H_{0}+1\right)^{N}\right|_{C_{c}^{\infty}\left(\IR^{d}\right)\otimes\mathcal{D}}\right)$ one obtains
		\begin{align}
			&\left(H_{0}+1\right)^{-k-1}\left[D,D^{\ast}\right]\left(H_{0}+1\right)^{-N+k}\phi=2i\left(H_{0}+1\right)^{-k-1}\left[c\nabla,A\left(X\right)\right]\left(H_{0}+1\right)^{-N+k}\phi\nonumber\\
			=&2i\sum_{j=1}^{d}\left(H_{0}+1\right)^{-k-1}\left[c\left(\Id x^{j}\right),A\right]\left(X\right)\p_{x^{j}}\left(H_{0}+1\right)^{-N+k}\phi\nonumber\\
			&+2i\sum_{j=1}^{d}\left(H_{0}+1\right)^{-k-1}c\left(\Id x^{j}\right)\left(\p^{End}_{x^{j}}A\right)\left(X\right)\left(H_{0}+1\right)^{-N+k}\phi\nonumber\\
			=&2i\left(H_{0}+1\right)^{-k-1}\sum_{j=1}^{d}\left[c\left(\Id x^{j}\right),A\right]\left(X\right)\p_{x^{j}}\left(H_{0}+1\right)^{-N+k}\phi\nonumber\\
			&+2i\left(H_{0}+1\right)^{-k-1}\left(c\nabla^{End}A\right)\left(X\right)\left(H_{0}+1\right)^{-N+k}\phi,
		\end{align}
		where we use that the conditions posed on $A$ imply that $\dom\left(A\left(x\right)\right)=\dom\left(A_{0}\right)$, for a.e. $x\in\IR^{d}$ (see Remark \ref{adomsamerem}), and that $c\left(\Id x^{j}\right)\left(\mathcal{D}\right)\subseteq\dom\left(A_{0}\right)$, $j\in\left\{1,\ldots,d\right\}$. Since $\p_{x^{j}}\left(H_{0}+1\right)^{-\frac{1}{2}}$ is bounded for $j\in\left\{1,\ldots,d\right\}$, it suffices to show that
		\begin{align}
			&\left(H_{0}+1\right)^{-k-1}\left[c\left(\Id x^{j}\right),A\right]\left(X\right)\left(H_{0}+1\right)^{-N+k+\frac{1}{2}},\ j\in\left\{1,\ldots,d\right\},\nonumber\\
			&\left(H_{0}+1\right)^{-k-1}\left(c\nabla^{End}A\right)\left(X\right)\left(H_{0}+1\right)^{-N+k},
		\end{align}
		admit trace-class extensions for $0\leq k\leq N-1$, which is a direct consequence of Corollary \ref{traceinterpolcor}.
	\end{proof}

	The above presented Theorem \ref{mainthm} should provide a large enough store of operator families $A$ such that one may investigate trace formulas and spectral shift functions of the pair $\left(D^{\ast}D, DD^{\ast}\right)$, which might lead to new results for the index of the Callias operator $D$ in this abstract setup.

	\textsc{O. F\"urst, Institut f\"ur Analysis, Leibniz Universit\"at Hannover, Welfengarten 1, 30167 Hannover, Germany}\\
	E-Mail: \url{fuerst@math.uni-hannover.de}

\end{document}